\title{Properties of the Ahlfors function}
\author{Anna-Mariya Otsetova}
\date{June 11, 2021}
\begin{document}

\pagestyle{fancy}
\fancyhf{}
\fancyhead[EL]{\nouppercase\leftmark}
\fancyhead[OR]{\nouppercase\rightmark}
\fancyhead[ER,OL]{\thepage}

\newtheorem{thm}{Theorem}[chapter]
\newtheorem*{thm*}{Theorem}  
\newtheorem*{conj*}{Conjecture}  
\newtheorem{lem}[thm]{Lemma}
\newtheorem{cor}[thm]{Corollary}
\newtheorem{prop}[thm]{Proposition}
\theoremstyle{definition}
\newtheorem{ex}[thm]{Example}
\newtheorem*{remark}{Remark}
\newtheorem{mydef}[thm]{Definition}

\maketitle

\frontmatter

\chapter*{Abstract}
Extremal problems in analysis have been studied for more than a hundred years. The so called $\textit{Ahlfors function}$ is the solution to one such problem. In this thesis, we shall study several properties of this function in a most general domain $\Omega$. We will consider the Ahlfors function in the context of $\textit{analytic capacity}$, and also relate it to Riemann maps of simply connected domains onto the unit disk. Our methods involve $\textit{Gelfand representation}$ of commutative Banach algebras and standard techniques form complex analysis.

\chapter*{Introduction}
Extremal problems have a rich historical background. These problems consist in finding the extrema of functions or functionals. In 1854, B. Riemann stated the now called $\textit{Riemann mapping theorem}$. He constructed a holomorphic bijective mapping between two arbitrary simply connected domains, different from the whole plane. The function he built appears as a solution of an extremal problem, i.e., finding an analytic function $f$ whose derivative at a point $z_0$ is maximized over a subset of analytic functions. Such extremal problems reappear throughout complex analysis. Almost a hundred years after Riemann, in 1947, Lars Ahlfors was working on a problem by Painlevé, on a possible extension of Liouville's theorem but for subsets of the complex plane. A key concept involved is known as the $\textit{analytic capacity}$ of a compact set. This led to the discovery of the now called $\textit{Ahlfors function}$, a solution to a similar extremal problem: finding an analytic function of modulus bounded by 1 that maximizes the derivative at a given point.

In this thesis, we shall explore the Ahlfors function and its properties in a most general domain. We will prove first its existence and uniqueness. We will show some norm-preserving results, and the "almost" surjectivity of the Ahlfors function. We shall also provide a proof of the non-separability of $H^{\infty}(\Omega)$ based on the properties of the Ahlfors function. The Ahlfors function is, of sorts, a generalization of the Riemann map. Therefore, we will also show that for a simply connected domain, the Ahlfors function coincides with the Riemann map. As mentioned previously, it is the key component in analytic capacity, which enables us to determine if a subset of the complex plane is removable, i.e., the only bounded analytic functions of this set are constants.

For the purposes of this thesis, we employ techniques from complex and functional analysis. We will use $\textit{Gelfand representation}$ of commutative Banach algebras in the specific case of $H^{\infty}(\Omega)$. By exploiting the space's algebraic structure and the isometry provided by the so-called $\textit{Gelfand transform}$, we will be able to introduce an embedding of the $H^{\infty}$ space, and work in its $\textit{maximal ideal space}$ instead. This embedding will equip us with a crucial and necessary property: weak star compactness of the unit ball of $H^{\infty}$. The results of this thesis are based on the exposition in \cite{fisher}.

\tableofcontents

\mainmatter

\chapter{Preliminaries}
In this chapter, we introduce certain concepts which will be used extensively in the main part of the thesis.
\section{Results from Complex Analysis} 
\begin{thm} \label{maxmod}
(Maximum modulus principle) Let $h(z)$ be a complex-valued harmonic function on a bounded domain $D$ such that $h(z)$ extends continuously to the boundary $\partial D$ of $D$. If $|h(z)|\leq M$ for all $z\in\partial D$, then $|h(z)|\leq M$ for all $z\in D$.
\end{thm}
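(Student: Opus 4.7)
The plan is to reduce the complex-valued case to the classical (strong) maximum principle for real-valued harmonic functions via a rotation, and invoke compactness to locate a maximum point.

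Since $\bar{D}$ is compact and $h$ is continuous on $\bar{D}$ by hypothesis, the continuous function $|h|$ attains a maximum value $R := \max_{z \in \bar{D}} |h(z)|$ at some point $z_0 \in \bar{D}$. If $z_0$ lies on $\partial D$, then $R = |h(z_0)| \leq M$ and the conclusion $|h(z)| \leq R \leq M$ on $D$ is immediate. So it suffices to handle the case where the maximum is attained only at some interior point $z_0 \in D$.

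In that case, I would write $h(z_0) = R e^{i\theta}$ for some real $\theta$ and consider the real-valued function
\[
u(z) := \operatorname{Re}\bigl(e^{-i\theta} h(z)\bigr).
\]
Since $h = u_1 + i v_1$ with $u_1, v_1$ real harmonic, $u$ is a real linear combination of $u_1, v_1$ and hence real harmonic on $D$, continuous on $\bar{D}$. Moreover $u(z) \leq |e^{-i\theta} h(z)| = |h(z)| \leq R$ for all $z \in \bar{D}$, while $u(z_0) = \operatorname{Re}(R) = R$. Hence $u$ attains its maximum $R$ at the interior point $z_0$.

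The main technical ingredient I would then invoke is the classical strong maximum principle for real harmonic functions on a connected open set: a real harmonic function that attains its supremum at an interior point of a domain must be constant. (This follows from the mean value property: the set where $u$ equals its supremum is both open, by the mean value identity on small disks, and closed, by continuity, hence all of $D$.) Applying this to $u$ gives $u \equiv R$ on $D$, and by continuity $u \equiv R$ on $\bar{D}$. But then for any boundary point $w \in \partial D$, $R = u(w) \leq |h(w)| \leq M$, whence $|h(z)| \leq R \leq M$ for every $z \in D$, as required. The only non-trivial step is the strong maximum principle for real harmonic functions, which I would treat as a standard prerequisite; the rest is the rotation device together with a compactness argument.
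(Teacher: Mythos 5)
The paper states this maximum modulus principle as a background result in the Preliminaries and supplies no proof of its own, so there is no in-paper argument to compare against. Judged on its own, your proof is correct and is the standard reduction: the rotation trick $u=\operatorname{Re}(e^{-i\theta}h)$ turns the complex-valued problem into one about a real harmonic function, and the strong maximum principle (via the open-and-closed argument from the mean value property) forces $u\equiv R$ if the maximum of $|h|$ were attained only in the interior, which then pins $R$ down by a boundary value. Two small points you rely on implicitly, both legitimate here: the word ``domain'' means a \emph{connected} open set, which is exactly what the open-and-closed argument needs, and boundedness of $D$ guarantees both that $\overline{D}$ is compact and that $\partial D$ is nonempty, so the final comparison $R=u(w)\le|h(w)|\le M$ at a boundary point $w$ is available. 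The degenerate case $R=0$ causes no trouble since $M\ge 0$ automatically. No gaps.
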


\begin{lem}\label{schwarz}
(The Schwarz Lemma) Let $f:\mathbb{D}\rightarrow\mathbb{D}$ be holomorphic with $f(0)=0$. Then,
\begin{enumerate}
    \item $|f(z)|\leq |z|$ for all $z\in\mathbb{D}$
    \item If for some $z_0\neq 0$ we have $|f(z_0)|=|z_0|$, then $f$ is a rotation.
    \item $|f'(0)|\leq 1$, and if equality holds then $f$ is a rotation.
\end{enumerate}
\end{lem}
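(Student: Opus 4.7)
The plan is to reduce all three parts to a single application of the maximum modulus principle (Theorem \ref{maxmod}) applied to an auxiliary function. First I would define
\[
g(z) = \begin{cases} f(z)/z, & z \neq 0, \\ f'(0), & z = 0. \end{cases}
\]
Since $f(0) = 0$ and $f$ is holomorphic on $\mathbb{D}$, its Taylor expansion at $0$ has no constant term, so $f(z)/z$ has a removable singularity at the origin. Hence $g$ is holomorphic on all of $\mathbb{D}$, with $g(0) = f'(0)$.

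Next, for each fixed $r \in (0,1)$, the hypothesis $|f| \leq 1$ forces $|g(z)| \leq 1/r$ on the circle $|z| = r$. Applying Theorem \ref{maxmod} to $g$ on the closed disk $\{|z| \leq r\}$ propagates this bound into the interior, giving $|g(z)| \leq 1/r$ whenever $|z| \leq r$. Fixing $z \in \mathbb{D}$ and letting $r \to 1^{-}$ yields $|g(z)| \leq 1$ throughout $\mathbb{D}$. Conclusion (1) then follows from $|f(z)| = |z|\,|g(z)|$, and the inequality in (3) follows from $f'(0) = g(0)$.

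For the equality statements in (2) and (3), I would appeal to the strong form of the maximum modulus principle: if a holomorphic function on a connected open set attains the maximum of its modulus at an interior point, then it is constant. In case (2), the hypothesis $|f(z_0)| = |z_0|$ with $z_0 \neq 0$ translates into $|g(z_0)| = 1$, which matches the bound just established; in the equality case of (3), the assumption $|f'(0)| = 1$ gives $|g(0)| = 1$, which plays the same role. In either scenario, $g \equiv c$ for some constant $c$ of modulus $1$, so $f(z) = cz$ is indeed a rotation.

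The main subtle point is that Theorem \ref{maxmod} as stated above covers only the weak maximum modulus principle, in which a bound on the boundary propagates to the interior, whereas the equality cases require the strong form, which forces constancy from interior attainment. I would bridge this gap either by quoting the strong maximum principle as a companion result from complex analysis, or by deriving it on the spot from Theorem \ref{maxmod} (for instance via the open mapping theorem, or by noting that $\log|g|$ is subharmonic and applying the mean value property to a small disk around the point of attainment).
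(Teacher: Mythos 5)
Your proof is correct; it is the classical argument via the auxiliary function $g(z)=f(z)/z$, the removable singularity at the origin, and the limit $r\to 1^{-}$ of the bound $|g|\le 1/r$ on $|z|\le r$. Note that the paper states this lemma as a background result in the preliminaries and offers no proof of its own, so there is nothing to compare against; your write-up would in fact fill a gap rather than duplicate one. The one subtlety you correctly identify yourself: Theorem \ref{maxmod} as stated in the paper is only the weak form of the maximum principle (boundary bound propagates inward), whereas the equality cases in (2) and (3) need the strong form (interior attainment of $\max|g|$ forces $g$ to be constant). Your proposed bridges work --- the open mapping theorem gives it immediately, since a non-constant $g$ would map $\mathbb{D}$ onto an open set containing a point of modulus $\sup_{\mathbb{D}}|g|$, hence also points of larger modulus, a contradiction --- so just be sure to state and invoke that strong form explicitly rather than citing Theorem \ref{maxmod} alone.
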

\begin{thm}
(Montel's theorem) If a family $\mathcal{F}$ of analytic functions on $D\subset\mathbb{C}$ is uniformly bounded on each compact subset of $D$, then every sequence in $\mathcal{F}$ has a uniformly convergent subsequence on each compact subset of $D$.
\end{thm}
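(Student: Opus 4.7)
The plan is to reduce the statement to the Arzel\`a--Ascoli theorem, which says that a pointwise bounded and equicontinuous family of continuous functions on a compact metric space has a uniformly convergent subsequence. Pointwise boundedness on any compact $K\subset D$ is immediate from the uniform-boundedness hypothesis, so the real content is (i) equicontinuity of $\mathcal{F}$ on each compact subset, and (ii) a diagonal argument that upgrades ``a convergent subsequence on one compact set'' to ``a single subsequence that converges uniformly on every compact subset of $D$.''

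For equicontinuity, I would fix a compact set $K\subset D$ and pick $r>0$ so small that the closed $2r$-neighbourhood $K'=\{z:\operatorname{dist}(z,K)\le 2r\}$ still sits inside $D$; this is possible because $D$ is open and $K$ compact. Let $M$ be the uniform bound of $\mathcal{F}$ on $K'$. For $z,w\in K$ with $|z-w|<r$, the circle $\gamma_z$ of radius $2r$ centred at $z$ lies in $K'$, and Cauchy's integral formula gives, for each $f\in\mathcal{F}$,
\[
f(z)-f(w)=\frac{1}{2\pi i}\int_{\gamma_z} f(\zeta)\Bigl(\frac{1}{\zeta-z}-\frac{1}{\zeta-w}\Bigr)d\zeta
=\frac{z-w}{2\pi i}\int_{\gamma_z}\frac{f(\zeta)}{(\zeta-z)(\zeta-w)}d\zeta.
\]
A direct length-times-sup estimate then yields $|f(z)-f(w)|\le C(M,r)\,|z-w|$ with $C(M,r)$ independent of $f$, so $\mathcal{F}|_K$ is uniformly Lipschitz, hence equicontinuous.

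To finish, exhaust $D$ by compact sets $K_1\subset K_2\subset\cdots$ with $K_n$ contained in the interior of $K_{n+1}$ and $\bigcup_n K_n=D$. Given a sequence $(f_n)\subset\mathcal{F}$, apply Arzel\`a--Ascoli on $K_1$ to extract a uniformly convergent subsequence, then extract a further subsequence that converges uniformly on $K_2$, and so on. The usual diagonal subsequence then converges uniformly on every $K_n$. Since any compact subset of $D$ is covered by finitely many, hence eventually by a single, $K_n$, this subsequence converges uniformly on every compact subset of $D$.

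The estimates and the diagonal construction are entirely routine; the only point that requires a little care is arranging the compact exhaustion so that each $K_n$ lies in the \emph{interior} of $K_{n+1}$, which is exactly what lets the Cauchy-formula step apply on $K_n$ with the uniform bound supplied by $K_{n+1}$. This is the main (and mild) obstacle.
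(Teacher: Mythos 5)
Your proof is correct. Note that the paper states Montel's theorem in its preliminaries without any proof, so there is nothing to compare against; your argument is the standard one. The Cauchy-formula estimate is right: on $\gamma_z$ you have $|\zeta-z|=2r$ and $|\zeta-w|\ge r$, giving the uniform Lipschitz bound $|f(z)-f(w)|\le (M/r)|z-w|$ on $K$, and the compact exhaustion with $K_n\subset\operatorname{int}(K_{n+1})$ together with the diagonal extraction correctly upgrades Arzel\`a--Ascoli on each $K_n$ to locally uniform convergence on all of $D$.
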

\begin{thm}\label{riemannmap}
(The Riemann mapping theorem) Let $D$ be a non-empty, simply connected and open subset of the complex plane $\mathbb{C}$, which is not all of $\mathbb{C}$. Then, there exists a unique bijective, holomorphic map $f$ from $D$ onto the open unit disk.
\end{thm}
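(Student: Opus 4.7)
The plan is to follow the classical extremal-problem approach. Fix a point $z_0 \in D$ and consider the family
\[\mathcal{F} = \{ g : D \to \mathbb{D} \mid g \text{ is holomorphic and injective with } g(z_0) = 0 \}.\]
I will produce the desired map $f$ as a maximizer of $|g'(z_0)|$ over $\mathcal{F}$, then argue that any such maximizer is automatically surjective onto $\mathbb{D}$.

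\textbf{Non-emptiness and compactness.} Since $D \neq \mathbb{C}$, I pick some $a \notin D$; simple connectedness of $D$ lets me choose a holomorphic branch $\varphi$ of $\sqrt{\,\cdot - a\,}$ on $D$. This $\varphi$ is injective, and by the open mapping theorem its image misses a whole disk centered at $-\varphi(z_0)$. Composing with a suitable Möbius transformation shrinks $\varphi(D)$ into $\mathbb{D}$ and sends $\varphi(z_0)$ to $0$, so $\mathcal{F}$ is non-empty. Since every member of $\mathcal{F}$ is bounded by $1$, Montel's theorem says $\mathcal{F}$ is a normal family; setting $M := \sup_{g \in \mathcal{F}} |g'(z_0)|$ (finite by the Cauchy estimates) and passing to a locally uniformly convergent subsequence $g_n \to f$ with $|g_n'(z_0)| \to M$, I obtain a limit $f$ with $|f'(z_0)|=M>0$, hence non-constant. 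Hurwitz's theorem preserves injectivity under this limit, and Theorem~\ref{maxmod} applied to $|f|$ forces $|f|<1$ on $D$, so $f \in \mathcal{F}$.

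\textbf{Surjectivity — the main obstacle.} The crux is showing that the extremal $f$ has image all of $\mathbb{D}$; I expect this to be the hardest step because the extremal construction a priori gives no reason to hit every point. Suppose for contradiction some $b \in \mathbb{D}$ is omitted. Composing $f$ with the disk automorphism $\phi_b(w)=(w-b)/(1-\bar b w)$ yields a non-vanishing holomorphic function on the simply connected $D$, which therefore admits a holomorphic square root $h: D \to \mathbb{D}$; $h$ is still injective. Normalizing by a further Möbius automorphism to put $h(z_0)$ at $0$ gives a new member of $\mathcal{F}$. A direct computation of derivatives at $z_0$, together with Lemma~\ref{schwarz} applied to the inverse construction (a self-map of $\mathbb{D}$ fixing $0$ that factors through squaring, hence cannot be a rotation), shows that the new derivative has modulus strictly greater than $M$, contradicting extremality.

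\textbf{Uniqueness.} Strictly speaking, uniqueness requires a normalization such as $f(z_0)=0$ with $f'(z_0)>0$. Given two normalized maps $f_1,f_2$, the composition $f_2 \circ f_1^{-1}$ is a holomorphic bijection $\mathbb{D}\to\mathbb{D}$ fixing $0$; applying Lemma~\ref{schwarz} to both $f_2\circ f_1^{-1}$ and its inverse forces it to be a rotation, and the positivity condition on the derivatives pins the rotation down to be the identity, so $f_1 = f_2$.
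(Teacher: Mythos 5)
The paper does not actually prove Theorem~\ref{riemannmap}: it is quoted in the Preliminaries as a known result, so there is no in-text argument to compare yours against. That said, your proposal is the standard extremal-function proof and it is essentially correct. It is worth noting that your surjectivity step --- composing with $\phi_b$, taking a square root on the simply connected domain, renormalizing, and using Lemma~\ref{schwarz} to see that the factoring map $\Phi=\phi_b^{-1}\circ(w\mapsto w^2)\circ\phi_{h(z_0)}^{-1}$ is a non-rotation self-map of $\mathbb{D}$ fixing $0$ --- is precisely the argument the paper itself deploys later for the Ahlfors function (Corollary~\ref{almost surjective}), and the extremal set-up mirrors the existence proof for the Ahlfors function and Theorem~\ref{Riemann}; so your route is the one the paper implicitly has in mind. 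Two small elisions you should patch: (i) to conclude that $\varphi(D)$ misses a disk centered at $-\varphi(z_0)$ you need not just injectivity of $\varphi$ but the fact that $\varphi$ never attains both a value and its negative (otherwise $z-a$ would vanish), which is what makes $\varphi(D)$ and $-\varphi(D)$ disjoint; (ii) Theorem~\ref{maxmod} as stated in the paper assumes a bounded domain with continuous boundary extension, so to get $|f|<1$ from $|f|\le 1$ you should instead invoke the open mapping theorem or the interior form of the maximum principle for the non-constant $f$. Finally, you are right that uniqueness fails as literally stated (the disk has a three-parameter automorphism group); your normalized uniqueness argument via the Schwarz lemma is correct and matches the normalization $\psi(p)=0$, $\psi'(p)>0$ that the paper itself adopts before Theorem~\ref{Riemann}.
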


\begin{thm}\label{Fatou}(Fatou's theorem) Let $f$ be a bounded analytic function in the unit disc. Then for almost every $t\in [0,2\pi)$, the limit $$f(e^{it})=\lim_{r\to 1^-}f(re^{it})$$
exists and $$\|f\|_\infty=\text{\rm ess sup}_{t\in [0,2\pi)}|f(e^{it})| .$$\end{thm}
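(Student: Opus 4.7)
The plan is to reduce the statement to a theorem about bounded harmonic functions and their Poisson integral representations. Writing $f = u + iv$, it suffices to show that the bounded harmonic functions $u$ and $v$ admit radial limits almost everywhere; the sum of those two limits will then give the boundary value of $f$. I would fix a bounded harmonic $u$ on $\mathbb{D}$ and consider the dilates $u_r(\theta)=u(re^{i\theta})$ for $0<r<1$. These form a bounded family in $L^\infty(\partial \mathbb{D})$, so by the Banach--Alaoglu theorem some subsequence $u_{r_n}$ converges weak-$*$ to a function $g\in L^\infty$ with $\|g\|_\infty \leq \|u\|_\infty$. Using the Poisson integral representation of $u$ on the closed disk of radius $r_n$ (valid because $u$ is continuous there and harmonic inside) and passing to the limit $n\to\infty$, I would obtain $u(\rho e^{i\theta}) = \frac{1}{2\pi}\int_0^{2\pi} P_\rho(\theta - s)\, g(s)\,ds$, exhibiting $u$ as the Poisson integral of the weak-$*$ boundary function $g$.

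The next step is to show $u(re^{i\theta}) \to g(\theta)$ for almost every $\theta$ as $r \to 1^-$. If $g$ were continuous this would follow immediately from the uniform convergence $P_r * g \to g$ (the standard approximate identity argument). For general $g\in L^\infty\subset L^1$, I would invoke the pointwise majorization of the Poisson integral by the Hardy--Littlewood maximal function, $\sup_{0<r<1}|(P_r * g)(\theta)| \le C\, Mg(\theta)$, combine it with the Lebesgue differentiation theorem, and approximate $g$ in $L^1$ by continuous functions. This is the standard Fatou-type argument yielding almost-everywhere radial convergence to $g$. Applying it to both $u=\text{Re}\,f$ and $v=\text{Im}\,f$ produces the pointwise a.e. existence of $f(e^{it})$, with a boundary function in $L^\infty$.

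For the norm identity, one inequality is immediate: since $|f(re^{it})|\le \|f\|_\infty$ for all $r<1$ and the radial limit exists a.e., one has $|f(e^{it})|\le \|f\|_\infty$ almost everywhere, so $\text{ess sup}_t |f(e^{it})|\le \|f\|_\infty$. The reverse direction uses that $f$ itself is the Poisson integral of its boundary values (same weak-$*$ argument as above, applied to $f$ rather than to $u$); then $|f(re^{i\theta})|=|(P_r * g)(\theta)| \le \|g\|_\infty = \text{ess sup}_t |f(e^{it})|$ for every $z = re^{i\theta} \in \mathbb{D}$, giving $\|f\|_\infty \le \text{ess sup}_t |f(e^{it})|$.

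The main obstacle is the a.e. convergence step: weak-$*$ compactness alone gives only convergence of Poisson-smoothed averages, not of $u_r$ itself at individual points. Overcoming this is really the core of Fatou's theorem and requires the Hardy--Littlewood maximal inequality together with the Lebesgue differentiation theorem, which together upgrade the integral convergence of the Poisson integrals into genuine pointwise convergence almost everywhere on the circle.
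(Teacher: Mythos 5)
The paper does not actually prove this statement: Fatou's theorem is listed in the Preliminaries as a classical result and is used as a black box (its only role in the thesis is in the proof of Theorem \ref{composition}), so there is no in-paper argument to compare against. Judged on its own, your outline is the standard and correct proof. The reduction to bounded harmonic functions, the use of Banach--Alaoglu on the dilates $u_r$ to produce a weak-$*$ boundary function $g$ with $u = P[g]$, and the upgrade from weak-$*$ convergence to a.e.\ radial convergence via the Hardy--Littlewood maximal function and an approximation by continuous functions (or, equivalently, the Lebesgue point argument) are exactly the classical route; you also correctly identify that the weak-$*$ step alone is not enough and that the maximal inequality is where the real work lies. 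Two small points to watch when writing it out in full: the passage to the limit in the Poisson formula uses that $P_{\rho/r_n}(\theta-\cdot)\to P_\rho(\theta-\cdot)$ in $L^1$ paired against the weak-$*$ convergent $u_{r_n}$, which should be said explicitly; and in the final norm identity you need to identify the weak-$*$ limit $G$ of $f_r$ with the a.e.\ radial boundary function $f(e^{it})$ (immediate from $G=g_1+ig_2$ once $u$ and $v$ have been handled), otherwise the inequality $\|f\|_\infty\le\|G\|_\infty$ is about the wrong function. Neither is a gap in the idea, only in the write-up.
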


\section{Definitions and results from Functional Analysis}
\subsection{Linear functionals and the Hahn-Banach theorem} \label{linear functionals}
\begin{mydef}
A linear functional $l$ is a mapping of a linear space X over a field \textbf{F} into \textbf{F}, that is additive and homogenous, i.e., it satisfies
\begin{equation}
    l(x+y) = l(x) + l(y), \forall x,y \in X
\end{equation}
and
\begin{equation}
    l(ax) = al(x), \forall a \in \textbf{F}
\end{equation}
\end{mydef}
\begin{ex}
Let X be the the space of all integrable functions, and let \textbf{F} be the reals. Then, $I(f) = \int_{a}^{b}f(x)dx$ is a linear functional from X to $\mathbb{R}$. We can verify the linearity of this functional as follows:
\begin{equation}
    I(f+g) = \int_{a}^{b}(f(x)+g(x))dx = \int_{a}^{b}f(x)dx + \int_{a}^{b}g(x)dx = I(f)+I(g)
\end{equation}
and
\begin{equation}
    I(af) = \int_{a}^{b}af(x)dx = a\int_{a}^{b}f(x)dx = aI(f)
\end{equation}
where $f,g \in X$ and $a \in \mathbb{R}$.
\end{ex}
The basic result about linear functionals is the Hahn-Banach theorem which we state below for vector spaces over the field of complex numbers.
 
\begin{thm} \label{Hahn-Banach}

(Complex Hahn-Banach) Let $X$ be a linear space over the complex numbers. Let $p$ be a real valued function satisfying

\begin{enumerate}
    \item $p(ax) = |a|p(x)$, $\forall a \in \mathbb{C}, x \in X$
    
    \item $p(x+y) \leq p(x) + p(y)$
\end{enumerate}

Let $Y$ be a subspace of $X$, on which a linear functional $l$ is defined, satisfying

\begin{equation}
    |l(y)| \leq p(y), \forall y \in Y
\end{equation}
Then, we can extend $l$ to the whole of $X$ such that 
\begin{equation}
    |l(x)| \leq p(x), \forall x \in X
\end{equation}
\end{thm}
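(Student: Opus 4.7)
The plan is to reduce the complex statement to the real Hahn--Banach theorem via the standard Bohnenblust--Sobczyk device, based on the observation that a complex-linear functional on a complex vector space is completely determined by its real part once the space is regarded as a real vector space. The real Hahn--Banach theorem itself is then proved by a Zorn's lemma argument extending the functional one real dimension at a time.

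First I would decompose $l(y) = u(y) + i v(y)$ with $u,v$ real-valued. Complex-linearity applied to $iy$ gives $l(iy) = i l(y)$, which after equating real and imaginary parts yields $v(y) = -u(iy)$, so $l(y) = u(y) - i u(iy)$, where $u$ is an $\mathbb{R}$-linear functional on $Y$ (viewed as a real vector space) dominated by $p$, since $|u(y)| \leq |l(y)| \leq p(y)$. Next I would invoke the real Hahn--Banach theorem to extend $u$ to an $\mathbb{R}$-linear functional $U$ on $X$ satisfying $U(x) \leq p(x)$ for every $x$; the crucial step of the real version is the one-dimensional extension $Y \oplus \mathbb{R} x_{0}$, where $U(x_{0})$ must be chosen inside the (non-empty) interval carved out by the inequalities that subadditivity of $p$ forces on $U$, namely $U(y_{1}) - p(y_{1} - x_{0}) \leq U(x_{0}) \leq p(y_{2} + x_{0}) - U(y_{2})$ for $y_{1}, y_{2} \in Y$. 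I would then define $L(x) = U(x) - i U(ix)$ and check that $L$ extends $l$ (immediate from $l = u - i u(i \cdot)$ on $Y$) and is complex-linear (additivity is clear, real homogeneity follows from that of $U$, and $L(ix) = U(ix) - i U(-x) = U(ix) + i U(x) = i L(x)$).

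The final step, which I expect to be the cleverest and the main obstacle, is the modulus bound $|L(x)| \leq p(x)$. Given $x \in X$, pick $\theta \in \mathbb{R}$ with $L(x) = |L(x)| e^{i\theta}$; then $|L(x)| = e^{-i\theta} L(x) = L(e^{-i\theta} x)$ is a non-negative real number, hence coincides with its own real part $U(e^{-i\theta} x)$, which is dominated by $p(e^{-i\theta} x) = |e^{-i\theta}| p(x) = p(x)$ by the first hypothesis on $p$. Apart from this rotation trick, the only genuine subtlety is the one-dimensional extension in the real case; the complex-to-real reduction and the reconstruction of $L$ from $U$ are otherwise purely algebraic.
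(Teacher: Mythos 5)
Your argument is correct and complete: the Bohnenblust--Sobczyk reduction $l = u - iu(i\cdot)$, the real Hahn--Banach extension via the one-dimensional interval condition (whose non-emptiness follows from $U(y_1)+U(y_2)\leq p(y_1-x_0)+p(y_2+x_0)$), and the rotation trick $|L(x)| = L(e^{-i\theta}x) = U(e^{-i\theta}x) \leq p(e^{-i\theta}x) = p(x)$ are exactly the standard proof. The paper itself omits the proof and refers to the literature, and what you have written is precisely the argument found there, so there is nothing to compare beyond noting agreement.
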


The proof of this theorem is omitted here, but can be found in \cite{func}.
\subsection{Dual spaces} Given a normed vector space $(X,\|\cdot\|)$ over $\mathbb{C}$ we consider the continuous linear functionals $l:X\to\mathbb{C}$. The following result characterizes such functionals.
\begin{prop} A linear functional $l:X\to\mathbb{C}$ is continuous if and only if it is bounded, i.e., there exists $C>0$ such that \begin{equation}\label{bounded-func}|l(x)|\le C\|x\|,\end{equation} for all $x\in X$.\end{prop}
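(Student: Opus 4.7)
The plan is to prove both directions of the equivalence separately, both relying on the linearity of $l$ to pass between arbitrary points and scaled versions of a single test vector.

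For the direction \emph{bounded implies continuous}, I would use the estimate (\ref{bounded-func}) together with linearity: for any $x, y \in X$, one has $|l(x) - l(y)| = |l(x-y)| \le C\|x - y\|$, which shows that $l$ is Lipschitz continuous (in fact uniformly continuous) on $X$, hence continuous. This direction is essentially immediate.

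For the converse, \emph{continuous implies bounded}, I would exploit continuity at the single point $0 \in X$, where $l(0) = 0$. By continuity, there exists $\delta > 0$ such that $\|y\| < \delta$ forces $|l(y)| < 1$. Given an arbitrary nonzero $x \in X$, I would rescale by setting $y = \frac{\delta}{2\|x\|}\, x$, so that $\|y\| = \delta/2 < \delta$ and therefore $|l(y)| < 1$. Using the homogeneity of $l$, $l(y) = \frac{\delta}{2\|x\|}\, l(x)$, and rearranging gives the bound
\begin{equation*}
|l(x)| \le \frac{2}{\delta}\, \|x\|,
\end{equation*}
which also holds trivially when $x = 0$. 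Setting $C = 2/\delta$ then yields (\ref{bounded-func}).

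No step here is really an obstacle: the only mild subtlety is remembering to translate continuity on all of $X$ to continuity at $0$ (which is allowed by the translation invariance coming from linearity), and then to use positive homogeneity to scale an arbitrary $x$ into the neighborhood of $0$ on which the bound $|l(y)|<1$ is available. Both are standard manipulations and the whole proof should fit in a short paragraph.
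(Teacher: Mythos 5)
Your proof is correct: both directions are the standard argument (Lipschitz continuity from the bound, and rescaling into a neighborhood of $0$ for the converse), and there are no gaps. The paper itself states this proposition without proof, so there is nothing to compare against; your write-up is exactly the expected textbook argument and could be inserted as is.
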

We denote by $X'$ the vector space of all  continuous linear functionals on $X$. Using the above result we can introduce a norm of $X'$ by
\begin{equation}\label{funct-norm} \|l\|=\inf\{C>0:~ \eqref{bounded-func}\text{ holds }\}=\sup_{\|x\|=1}|l(x)|.\end{equation}
It turns out that $X'$ is a Banach space with respect to this norm. It is  called the {\em dual} of $X$.
\subsection{The weak-star topology and the Banach-Alaoglu theorem}
One of the major difficulties in dealing with infinite dimensional normed spaces is the fact that bounded subsets are not necessarily relatively compact in the norm topology. This hurdle can, in some sense, be removed by using weaker topologies than the one induced by the norm. In this paragraph we will focus on dual spaces and present the famous theorem of Banach and Alaoglu.
\begin{mydef} Let $X$ be a normed space. 
The weak-star topology on its  X is the smallest topology on X'  such that  maps $l\to l(x)$, where $x\in X$ is fixed, are continuous.
\end{mydef}
The topology can be described with help of the following basis of neighborhoods
$$V_{\varepsilon,x_1,\ldots,x_n}(l)=\{m\in X':~ |m(x_j)-l(x_j)|<\varepsilon,~1\le j\le n\},$$
where $l\in X'$ and $\varepsilon>0,~x_1,\ldots,x_n \in X$ are fixed but arbitrary.
\begin{thm}(Banach-Alaoglu) The unit ball in $X'$, $\{l\in X':~\|l\|\le 1\}$, is compact in the weak-star topology.
\end{thm}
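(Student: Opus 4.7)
The plan is to realize the unit ball $B=\{l\in X':\,\|l\|\le 1\}$ as a closed subset of a suitable compact product space and then invoke Tychonoff's theorem. For each $x\in X$, let $D_x=\{z\in\mathbb{C}:\,|z|\le\|x\|\}$, a compact disk in $\mathbb{C}$, and put $P=\prod_{x\in X}D_x$ with the product topology; by Tychonoff's theorem $P$ is compact. The natural map to consider is the evaluation
$$\Phi:B\to P,\qquad \Phi(l)=(l(x))_{x\in X},$$
which is well-defined, since \eqref{funct-norm} gives $|l(x)|\le\|l\|\|x\|\le\|x\|$ for every $l\in B$, so each coordinate lands in the correct disk.

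Next I would check that $\Phi$ is a homeomorphism from $B$, equipped with the weak-star topology, onto $\Phi(B)$ with the subspace topology inherited from $P$. This is essentially tautological: both topologies are precisely the topology of pointwise convergence on $X$, since the weak-star topology is by definition the coarsest one making all evaluations $l\mapsto l(x)$ continuous, and the product topology is the coarsest one making all projections $\pi_x$ continuous. Injectivity of $\Phi$ follows from the fact that a linear functional is determined by its values on $X$. Once this is in place, the conclusion of the theorem will follow as soon as $\Phi(B)$ is shown to be closed in $P$, because a closed subset of a compact space is compact and compactness transfers across the homeomorphism.

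The main obstacle is therefore the closedness of $\Phi(B)$ in $P$. I would identify this image explicitly as
$$\Phi(B)=\bigcap_{\substack{x,y\in X\\ a,b\in\mathbb{C}}}\bigl\{f\in P:\,f(ax+by)-af(x)-bf(y)=0\bigr\}.$$
The inclusion $\subseteq$ is immediate from linearity of functionals in $B$. For the reverse inclusion, any $f\in P$ satisfying all these relations is already a linear map $l(x):=f(x)$ on $X$, and the condition $f(x)\in D_x$ built into membership in $P$ gives $|l(x)|\le\|x\|$, so $l\in B$ and $\Phi(l)=f$. Each individual constraint set on the right-hand side is closed in $P$, because for any fixed $z\in X$ the coordinate projection $\pi_z:P\to\mathbb{C}$ is continuous by construction of the product topology, and the expression $\pi_{ax+by}(f)-a\pi_x(f)-b\pi_y(f)$ is therefore a continuous function of $f$ whose zero set is closed. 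An arbitrary intersection of closed sets being closed, $\Phi(B)$ is closed, hence compact, which finishes the proof.
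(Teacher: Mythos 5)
Your proof is correct and follows exactly the strategy the paper itself outlines (embedding the unit ball into $P=\prod_{x\in X}D_x$, invoking Tychonoff, and reducing to closedness of the image). You also correctly supply the one step the paper explicitly leaves out --- writing $\Phi(B)$ as an intersection of zero sets of continuous functions built from coordinate projections --- so nothing is missing.
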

Note that every $l\in X',~\|l\|\le 1$,  is a function from $X$ to $\mathbb{C}$ such that for every $x\in X$, the value $l(x)$ belongs to the closed disc $D_x$ in $\mathbb{C}$ centered at the origin, with radius $\|x\|$. In other words $l$ can be identified with an element of the cartesian product $$P=\prod_{x\in X}D_x.$$
By Tychonov's theorem $P$ is compact, and the heart of the proof consists in showing that the image of the unit ball of $X'$ by this embedding is closed in $P$.

\subsection{Gelfand's theory of Commutative Banach Algebras} \label{gelfandsection}
The purpose of the present paragraph is to present Gelfand's representation  of commutative   Banach algebras as spaces of continuous complex-valued functions on a compact Hausdorff  topological space.

\begin{mydef}\label{algebra}
An algebra is a vector space equipped with an associative  bilinear product. The algebra is commutative if the corresponding product is.  
\end{mydef}

\begin{mydef}\label{Banach-alg}
 A Banach algebra $(\mathcal{L},\|\cdot\|)$ is a normed associative algebra over the complex numbers which is complete and whose norm satisfies
\begin{equation}
    \Vert {xy}\Vert \leq \Vert x \Vert\Vert y \Vert, \forall x,y \in \mathcal{L}. 
\end{equation}
We say that $e\in \mathcal{L}$ is a  unit if $ex=x,~x\in \mathcal{L}$ and $\|e\|=1$.
\end{mydef}

Throughout in what follows $\mathcal{L}$ will be a commutative Banach algebra with the (unique) unit $e$.

\begin{mydef}\label{spectrum} The spectrum of $x\in \mathcal{L}$ is denoted by $\sigma(x)$ and is  the set of complex numbers $\lambda$ such that $\lambda e-x$ is not invertible in $\mathcal{L}$.\end{mydef}
The {\em spectral radius} of $x\in \mathcal{L}$ is denoted by $r(x)$ and defined by $r(x)=\sup_{\lambda\in\sigma(x)}|\lambda|$. It can be computed from the formula
$$r(x)=\limsup_{n\to\infty}\|x^n\|^{\frac1{n}}=\lim_{n\to\infty}\|x^n\|^{\frac1{n}}.$$
In general, $r(x)\le\|x\|$. A necessary and sufficient condition for equality is\begin{equation}\label{sp-radius-norm}\|x^2\|=\|x\|^2, \quad x\in \mathcal{L}.\end{equation}
  The basic theorem about spectra is as follows.
\begin{thm} For every $x\in \mathcal{L}$ its spectrum $\sigma(x)$ is a non-void compact subset of $\mathbb{C}$.\end{thm}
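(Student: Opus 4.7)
The plan is to establish the three required properties separately: boundedness, closedness, and non-emptiness of $\sigma(x)$. The first two are relatively quick consequences of the Neumann-series machinery in a Banach algebra, while the third requires a Liouville-type argument and is where the real work lies.

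For \textbf{boundedness}, I would show $\sigma(x) \subseteq \{|\lambda| \le \|x\|\}$. If $|\lambda| > \|x\|$, factor $\lambda e - x = \lambda(e - x/\lambda)$ and observe that since $\|x/\lambda\| < 1$, the submultiplicativity of the norm guarantees absolute convergence of the Neumann series $\sum_{n=0}^{\infty}(x/\lambda)^n$ in $\mathcal{L}$ (a Banach space by assumption). Completeness gives a sum $s\in\mathcal{L}$, and a direct check using $(e - x/\lambda)\sum_{n=0}^{N}(x/\lambda)^n = e - (x/\lambda)^{N+1}$, passed to the limit, shows $s = (e - x/\lambda)^{-1}$. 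Hence $\lambda\notin\sigma(x)$. For \textbf{closedness}, I would show that the set $G$ of invertible elements is open: if $y\in G$ and $\|z-y\| < \|y^{-1}\|^{-1}$, then $\|e - y^{-1}z\| < 1$, so $y^{-1}z$ is invertible by the same Neumann-series argument, hence so is $z$. The map $\lambda\mapsto \lambda e - x$ is continuous, so its preimage of $G$ is open, i.e.\ the complement of $\sigma(x)$ is open.

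For \textbf{non-emptiness}, I would argue by contradiction: suppose $\sigma(x)=\emptyset$. Define the resolvent $R(\lambda)=(\lambda e - x)^{-1}$ on all of $\mathbb{C}$. Using the standard identity
\begin{equation*}
R(\mu) - R(\lambda) = -(\mu - \lambda)R(\mu)R(\lambda),
\end{equation*}
one sees $R$ is norm-differentiable, in particular continuous, and that for any $l\in\mathcal{L}'$ the scalar-valued function $\lambda\mapsto l(R(\lambda))$ is entire. On the other hand, for $|\lambda| > \|x\|$ the explicit formula $R(\lambda) = \lambda^{-1}\sum_{n\ge 0}(x/\lambda)^n$ yields $\|R(\lambda)\| \le (|\lambda|-\|x\|)^{-1}\to 0$ as $|\lambda|\to\infty$, so $l\circ R$ is bounded on $\mathbb{C}$ and vanishes at infinity. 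Liouville's theorem then forces $l(R(\lambda))\equiv 0$. Since this holds for every continuous linear functional $l$, the Hahn-Banach theorem (Theorem \ref{Hahn-Banach}, via its standard corollary that functionals separate points) gives $R(\lambda)=0$ for all $\lambda$. This contradicts the fact that $R(\lambda)$ is invertible, hence nonzero (as $e\neq 0$).

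The main obstacle is the non-emptiness step: one has to invoke three nontrivial ingredients — the resolvent identity to pass from algebraic invertibility to a holomorphic object, Liouville's theorem from scalar complex analysis to force vanishing, and Hahn-Banach to upgrade a scalar statement to a vector statement in $\mathcal{L}$. Boundedness and closedness, by contrast, are essentially packaging of the Neumann-series estimate. I would state and prove the openness of $G$ as a small lemma first, since both the closedness of $\sigma(x)$ and the continuity of $R$ in the nonemptiness argument rely on it.
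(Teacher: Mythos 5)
Your proof is correct and complete: the Neumann-series argument gives boundedness and (via openness of the group of invertibles) closedness, and the resolvent-identity/Liouville/Hahn--Banach argument gives non-emptiness; this is the standard proof. Note, however, that the paper states this theorem as a background fact in its preliminaries and supplies no proof at all (as with Hahn--Banach, it is implicitly deferred to the functional-analysis references), so there is nothing in the text to compare your argument against. Your one organizational choice --- isolating the openness of the invertible group as a lemma used both for closedness and for the continuity of the resolvent --- is exactly the right way to structure it.
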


We are particularly interested in {\em ideals} of such  algebras defined as follows.
\begin{mydef}\label{ideal}
The subset $\mathcal{I}$ of $\mathcal{L}$ is an ideal if it satisfies the following properties:
\begin{enumerate}
    \item $\mathcal{I}$ is a linear subspace of $\mathcal{L}$ \\
    
    \item For any $\textbf{x} \in \mathcal{L}$, $\textbf{x}\mathcal{I} \subset \mathcal{I}$ \\
    
    \item $\mathcal{I}$ is neither $\{0\}$, nor all of $\mathcal{L}$ \\
\end{enumerate}
A maximal ideal is one that is not contained in any other ideal.
\end{mydef}
It is a direct consequence of Zorn's lemma that maximal ideals exist, in fact, any ideal is contained in a maximal ideal.
Some properties of ideals are given below. Recall that  if  $X$ is a Banach space and $Y$ is a closed subspace then the quotient $X/Y=\{x+Y:~x\in X\}$ is a Banach space w.r.t. the quotient norm $$\|x+Y\|=\inf_{y\in Y}\|x+y\|.$$
\begin{lem}(i) An ideal cannot contain invertible elements.\\
(ii) The closure of  an ideal $\mathcal{I}$ is either an  ideal, or the whole algebra. Maximal ideals are closed.\\
(iii)  If $\mathcal{I}$ is a closed ideal  then $\mathcal{A}=\mathcal{L}/\mathcal{I}$ is a commutative Banach algebra with unit
$[e]=e+\mathcal{I}$  w.r.t. to the product $$[x][y]=[xy]\Leftrightarrow (x+\mathcal{I})(y+\mathcal{I})=xy+\mathcal{I},$$
and the quotient map $x\to [x]= x+\mathcal{I}$ is an algebra-homomorphism. 
\end{lem}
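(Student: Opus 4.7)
The plan for (i) is immediate from the ideal axioms: if an invertible element $x$ lies in $\mathcal{I}$, then by the absorbing property $e=x^{-1}x\in \mathcal{I}$, and then $y=ye\in \mathcal{I}$ for every $y\in\mathcal{L}$, forcing $\mathcal{I}=\mathcal{L}$, contrary to axiom (3) of the ideal definition.

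For (ii), I would first note that addition, scalar multiplication, and left multiplication by a fixed $x\in\mathcal{L}$ are all continuous on $\mathcal{L}$ (the last because of submultiplicativity), so if sequences from $\mathcal{I}$ converge, their sums, scalar multiples, and products with $x$ remain in $\overline{\mathcal{I}}$. Hence $\overline{\mathcal{I}}$ satisfies properties (1) and (2) of an ideal, and either also satisfies (3) or coincides with the whole algebra. To rule out the second case when $\mathcal{I}$ is maximal, I would invoke the standard Banach-algebra fact that the set of invertible elements is open: whenever $\|e-x\|<1$, the Neumann series $\sum_{n\ge 0}(e-x)^n$ converges absolutely and yields an inverse of $x$. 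Combined with (i), this shows $\mathcal{I}$ sits inside the closed set of non-invertible elements, so $e\notin\overline{\mathcal{I}}$ and $\overline{\mathcal{I}}\ne\mathcal{L}$. Consequently, if $\mathcal{M}$ is maximal, $\overline{\mathcal{M}}$ is an ideal containing $\mathcal{M}$, and maximality forces $\overline{\mathcal{M}}=\mathcal{M}$.

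For (iii), well-definedness of the product on $\mathcal{A}=\mathcal{L}/\mathcal{I}$ follows from the identity
$$x_1 y_1 - x_2 y_2 = x_1(y_1-y_2)+(x_1-x_2)y_2\in\mathcal{I}$$
whenever $x_1-x_2,\,y_1-y_2\in\mathcal{I}$, using property (2). Bilinearity, associativity, and commutativity of $[x][y]=[xy]$ are inherited termwise from $\mathcal{L}$, and the quotient map is visibly an algebra homomorphism. Submultiplicativity of the quotient norm is obtained by writing, for arbitrary $u,v\in\mathcal{I}$,
$$(x+u)(y+v)=xy+(xv+uy+uv)\in xy+\mathcal{I},$$
so $\|[xy]\|\le\|x+u\|\,\|y+v\|$; taking the infimum over $u,v$ gives $\|[xy]\|\le\|[x]\|\,\|[y]\|$. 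Completeness of $\mathcal{A}$ is automatic since $\mathcal{I}$ is closed. The only delicate point is $\|[e]\|=1$: the bound $\|[e]\|\le\|e\|=1$ is clear, and for the reverse inequality I would argue by contradiction—if $\|[e]\|<1$ then some $z\in\mathcal{I}$ satisfies $\|e-z\|<1$, which by the Neumann series argument above makes $z$ invertible, contradicting~(i).

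The principal obstacle across the three parts is really a single one: the openness of the group of invertible elements, which is what both prevents $\overline{\mathcal{I}}$ from swallowing $e$ (making maximal ideals closed) and forces the unit of the quotient algebra to have norm exactly $1$. Once that standard fact is in hand, the remainder is routine verification of algebraic identities and infima.
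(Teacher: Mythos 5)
The paper states this lemma as background in the preliminaries and gives no proof, so there is nothing to compare against; your argument is the standard one and is correct in all three parts, including the two points that actually require the Banach-algebra structure (the Neumann-series openness of the invertible group, used both to show $e\notin\overline{\mathcal{I}}$ and to get $\|[e]\|=1$). No gaps.
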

The main theorem about ideals of such algebras is the so-called Gelfand-Mazur theorem which we state in the appropriate form for our purposes.
\begin{thm} \thlabel{isomorphism} If $\mathcal{I}$ is a maximal ideal in $\mathcal{L}$, then the algebra  $\mathcal{L}/\mathcal{I}$ is isomorphic to the field of complex numbers.
\end{thm}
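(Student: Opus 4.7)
The plan is to prove the statement in two main stages: first show that $\mathcal{A}=\mathcal{L}/\mathcal{I}$ is a field (every non-zero element is invertible), then use the non-emptiness of spectra in a Banach algebra to conclude that $\mathcal{A}$ is one-dimensional over $\mathbb{C}$.

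For the first stage, the setup is straightforward. By the Lemma, $\mathcal{I}$ is closed, hence $\mathcal{A}$ is a commutative Banach algebra with unit $[e]$. Take any non-zero element $[x]\in\mathcal{A}$, i.e., $x\in\mathcal{L}\setminus\mathcal{I}$. I would consider the set
$$J=\{i+xy: i\in\mathcal{I},~y\in\mathcal{L}\}.$$
It is easy to verify that $J$ is a linear subspace closed under multiplication by arbitrary elements of $\mathcal{L}$ (using commutativity), contains $\mathcal{I}$ (take $y=0$), and contains $x=0+xe\notin\mathcal{I}$. Thus $J$ strictly contains $\mathcal{I}$. By Definition~\ref{ideal}, if $J$ were an ideal it would contradict maximality of $\mathcal{I}$; therefore $J$ must equal all of $\mathcal{L}$. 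In particular, $e=i+xy$ for some $i\in\mathcal{I}$ and $y\in\mathcal{L}$, which in the quotient reads $[e]=[x][y]$. Hence $[x]$ is invertible.

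For the second stage, pick any $[x]\in\mathcal{A}$ and apply the preceding theorem to conclude that $\sigma([x])\subset\mathbb{C}$ is non-empty. Choose $\lambda\in\sigma([x])$; by definition $\lambda[e]-[x]$ is not invertible in $\mathcal{A}$. But $\mathcal{A}$ is a field by the first stage, so the only non-invertible element is $0$. Therefore $[x]=\lambda[e]$. This $\lambda$ is unique: if $\lambda_1[e]=\lambda_2[e]$ with $\lambda_1\neq\lambda_2$, then $[e]=0$, i.e., $e\in\mathcal{I}$, which is impossible since an ideal contains no invertible elements (item (i) of the Lemma). Define $\varphi:\mathcal{A}\to\mathbb{C}$ by $\varphi([x])=\lambda$, where $[x]=\lambda[e]$. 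The routine check that $\varphi$ is additive, multiplicative, $\mathbb{C}$-linear, and bijective with inverse $\lambda\mapsto\lambda[e]$ completes the proof.

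The main obstacle is the first stage, specifically the argument that $J$ must equal $\mathcal{L}$ rather than being a proper ideal strictly containing $\mathcal{I}$. This is where the \emph{maximality} hypothesis enters the proof in an essential way; every other ingredient (closedness of maximal ideals, quotient Banach algebra structure, non-emptiness of spectra) is available from the preceding results in the chapter, and the spectral argument in the second stage is then almost automatic.
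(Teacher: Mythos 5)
The paper states this result (as the Gelfand--Mazur theorem) without supplying a proof, so there is no in-text argument to compare yours against; judged on its own, your proof is the standard one and it is correct. Stage one works because of the paper's convention that an ideal is by definition proper: your set $J=\{i+xy:\ i\in\mathcal{I},\ y\in\mathcal{L}\}$ satisfies conditions (1) and (2) of the definition, strictly contains $\mathcal{I}$ (it contains $x=xe$), so maximality forces $J=\mathcal{L}$ and hence $[x]$ is invertible. Stage two correctly isolates the genuinely deep ingredient, the non-voidness of the spectrum, which the paper does provide as a stated theorem; to apply it to $\mathcal{A}=\mathcal{L}/\mathcal{I}$ you need $\mathcal{A}$ to be a unital commutative Banach algebra, which is exactly parts (ii) and (iii) of the Lemma on ideals that you cite. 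The remaining verifications (well-definedness and bijectivity of $\varphi$, $[e]\neq 0$ because ideals contain no invertibles) are handled correctly, so the argument is complete.
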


Maximal ideals can be identified with non-zero  multiplicative linear functionals on the algebra in question.
\begin{mydef} \thlabel{multiplicativefunc}
A multiplicative  linear  functional $p$ is an algebra homomorphism of $\mathcal{L}$ into $\mathbb{C}$. The set of non-zero multiplicative linear functionals on $\mathcal{L}$ is denoted by $\mathcal{M}(\mathcal{L})$.
\end{mydef}
A remarkable property of these homomorphisms is that they are automatically continuous. 
\begin{prop}\label{contractive hom}
If  $p\in \mathcal{M}(\mathcal{L})$ then $p(e)=1$ and $p$  is a contraction, i.e.,
\begin{equation}
    |p(\textbf{x})| \leq |\textbf{x}|, \textbf{x} \in \mathcal{L}
\end{equation}
\end{prop}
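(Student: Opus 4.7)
The plan is to handle the two assertions in order, using the multiplicative structure of $p$ together with the spectral radius bound $r(x)\le\|x\|$.

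First I would establish $p(e)=1$. Since $p$ is, by definition, a nonzero functional, there is some $x\in\mathcal{L}$ with $p(x)\ne 0$. Applying multiplicativity to the identity $ex=x$ gives $p(e)p(x)=p(x)$, and dividing by $p(x)$ yields $p(e)=1$.

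Next I would prove the contraction inequality $|p(x)|\le\|x\|$ for every $x\in\mathcal{L}$. The key intermediate claim is that $p(x)\in\sigma(x)$. To see this, set $\lambda=p(x)$ and consider $\lambda e-x$. Its image under $p$ is $\lambda p(e)-p(x)=\lambda-\lambda=0$, so $\lambda e-x\in\ker p$. Now $\ker p$ cannot contain any invertible element: if $y\in\ker p$ were invertible with inverse $y^{-1}$, then $1=p(e)=p(yy^{-1})=p(y)p(y^{-1})=0$, a contradiction. Hence $\lambda e-x$ is not invertible, i.e., $\lambda\in\sigma(x)$. Combining this with the bound $|\lambda|\le r(x)\le\|x\|$ quoted in the excerpt just before the Gelfand--Mazur theorem, we conclude $|p(x)|\le\|x\|$.

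There is no serious obstacle here; the only subtlety is recognizing that the contraction property is really a spectral statement, namely that the image of $x$ under any $p\in\mathcal{M}(\mathcal{L})$ lies inside $\sigma(x)$. Once this is noted, both parts fall out by direct manipulation of the identities $p(e)p(x)=p(x)$ and $p(yy^{-1})=p(y)p(y^{-1})$, together with the fact that ideals avoid invertible elements (already recorded in the preceding lemma).
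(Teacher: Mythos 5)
Your proof is correct. The paper states this proposition in its preliminaries without providing a proof, so there is no in-text argument to compare against; but your route --- deducing $p(e)=1$ from multiplicativity on a nonzero value, then showing $p(x)\in\sigma(x)$ because $p(x)e-x\in\ker p$ and $\ker p$ contains no invertible elements, and finally invoking $|p(x)|\le r(x)\le\|x\|$ --- is precisely the standard argument that the surrounding text is set up to support (the lemma that ideals avoid invertibles and the spectral radius bound are both recorded immediately before the proposition). The only cosmetic point is that the displayed inequality in the statement writes $|\textbf{x}|$ where the norm $\|\textbf{x}\|$ is intended; you correctly read it as the norm.
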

The relation to maximal ideals comes from the fact that if  $p\in \mathcal{M}(\mathcal{L})$, its null-space $$\ker p=\{x\in\mathcal{L}:~p(x)=0\}$$
has codimension $1$ in $\mathcal{L}$. Hence it is a maximal subspace and, in particular, it is a maximal ideal. Conversely, for a maximal ideal $\mathcal{I}$ we can compose the quotient map $x\to x+\mathcal{I}$ with the isomorphism provided by the Gelfand-Mazur theorem to obtan an element of  $\mathcal{M}(\mathcal{L})$. Thus we arrive at the following result. 
\begin{cor}\label{max-ideal} The map  $\mathcal{M}(\mathcal{L})\ni p\to \ker p$ is a bijection from  $\mathcal{M}(\mathcal{L})$ onto the set of maximal ideals of $\mathcal{L}$.\end{cor}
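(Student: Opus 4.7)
The goal is to show the assignment $p\mapsto \ker p$ is a well-defined map from $\mathcal{M}(\mathcal{L})$ into the set of maximal ideals, and that it is both injective and surjective. I would break the argument into three steps in this order: well-definedness, injectivity, surjectivity.

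\textbf{Well-definedness.} Given $p\in\mathcal{M}(\mathcal{L})$, I first want to verify that $\ker p$ is a maximal ideal. Linearity of $p$ makes $\ker p$ a subspace, and the multiplicativity $p(xy)=p(x)p(y)$ together with $p(e)=1$ (from Proposition~\ref{contractive hom}) forces $x(\ker p)\subset \ker p$ for every $x\in\mathcal{L}$ and ensures $\ker p\ne\mathcal{L}$. For maximality, the identity $x-p(x)e\in\ker p$ shows $\mathcal{L}=\ker p\oplus\mathbb{C} e$, so $\ker p$ has codimension one; in particular it is already maximal as a linear subspace, hence a fortiori maximal as an ideal.

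\textbf{Injectivity.} The decomposition $x=(x-p(x)e)+p(x)e$ used above is the key tool: if $p_1,p_2\in \mathcal{M}(\mathcal{L})$ share the same kernel, then for each $x$ the element $x-p_1(x)e$ lies in $\ker p_1=\ker p_2$, and applying $p_2$ together with $p_2(e)=1$ yields $p_2(x)=p_1(x)$.

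\textbf{Surjectivity.} This is the most substantive step, but all the machinery is already available. Starting from a maximal ideal $\mathcal{I}$, part (ii) of the lemma preceding the statement tells me $\mathcal{I}$ is closed, and part (iii) then provides the quotient Banach algebra $\mathcal{L}/\mathcal{I}$ with unit $[e]$ and a continuous multiplicative quotient map $\pi\colon x\mapsto x+\mathcal{I}$. By the Gelfand--Mazur theorem~\thref{isomorphism}, there is an algebra isomorphism $\varphi\colon \mathcal{L}/\mathcal{I}\to\mathbb{C}$. The composition $p=\varphi\circ\pi$ is then a non-zero algebra homomorphism $\mathcal{L}\to\mathbb{C}$, i.e. an element of $\mathcal{M}(\mathcal{L})$, and its kernel is precisely $\pi^{-1}(0)=\mathcal{I}$.

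The main conceptual obstacle is the surjectivity step, because it is the one place where more than formal manipulation with $p$ is needed: one must know that the quotient by a maximal ideal is a field, and invoke Gelfand--Mazur to identify that field with $\mathbb{C}$. Everything else reduces to the codimension-one observation $\mathcal{L}=\ker p\oplus\mathbb{C} e$, which simultaneously powers the maximality of $\ker p$ and the injectivity of the correspondence.
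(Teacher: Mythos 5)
Your argument is correct and follows essentially the same route as the paper: the codimension-one decomposition $\mathcal{L}=\ker p\oplus\mathbb{C}e$ gives that $\ker p$ is a maximal ideal, and surjectivity comes from composing the quotient map with the Gelfand--Mazur isomorphism. You are merely more explicit than the paper about injectivity, which the text leaves implicit.
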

For this reason  $\mathcal{M}(\mathcal{L})$ is also called the {\em maximal ideal space of} $\mathcal{L}$. 
Note that  $\mathcal{M}(\mathcal{L})$ is a subset of the dual $\mathcal{L}'$ of the Banach space $\mathcal{L}$, and by Proposition \ref{contractive hom},  $\mathcal{M}(\mathcal{L})$ is a subset of the unit sphere in $\mathcal{L}'$. Then using the Banach-Alaoglu theorem one can prove the following basic result.
\begin{thm}\label{max-id-compact} $\mathcal{M}(\mathcal{L})$ is compact in the weak-star topology on  $\mathcal{L}'$.\end{thm}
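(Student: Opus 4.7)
The plan is to exhibit $\mathcal{M}(\mathcal{L})$ as a weak-star closed subset of the closed unit ball of $\mathcal{L}'$, and then invoke the Banach--Alaoglu theorem together with the fact that a closed subset of a compact space is compact. The containment in the unit ball is immediate from Proposition \ref{contractive hom}: every $p\in\mathcal{M}(\mathcal{L})$ satisfies $|p(x)|\le\|x\|$ for all $x\in\mathcal{L}$, so $\|p\|\le 1$. Thus the remaining task is to verify that $\mathcal{M}(\mathcal{L})$ is closed in the weak-star topology.

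For the closedness, I would take an arbitrary net $(p_\alpha)\subset\mathcal{M}(\mathcal{L})$ that converges weak-star to some $p\in\mathcal{L}'$, meaning that $p_\alpha(x)\to p(x)$ in $\mathbb{C}$ for every fixed $x\in\mathcal{L}$. Linearity of $p$ is automatic as a weak-star limit of linear functionals. To establish that $p\in\mathcal{M}(\mathcal{L})$, I need to check two things. First, multiplicativity: for fixed $x,y\in\mathcal{L}$ we have $p_\alpha(xy)=p_\alpha(x)p_\alpha(y)$ for every $\alpha$; passing to the limit in $\mathbb{C}$, the left-hand side tends to $p(xy)$ and the right-hand side to $p(x)p(y)$, since the product of scalar limits is the limit of the products. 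Hence $p(xy)=p(x)p(y)$. Second, non-vanishing: since $p_\alpha(e)=1$ for every $\alpha$, the limit satisfies $p(e)=1$, which rules out $p\equiv 0$. Consequently $p\in\mathcal{M}(\mathcal{L})$, and the set is weak-star closed, hence compact.

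There is no serious obstacle in this argument: it reduces to the observation that the defining properties of a non-zero multiplicative linear functional (linearity, multiplicativity, and $p(e)=1$) are preserved under pointwise limits, because each of them can be expressed by equalities involving finitely many of the evaluation maps $p\mapsto p(x)$, which are weak-star continuous by the very definition of that topology. The one minor subtlety worth flagging is that one should work with nets rather than sequences, since the weak-star topology on $\mathcal{L}'$ is not metrizable in general, so sequential closedness would not automatically imply topological closedness.
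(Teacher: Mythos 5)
Your proof is correct and follows exactly the route the paper indicates (the paper states this theorem without a detailed proof, saying only that it follows from the Banach--Alaoglu theorem): containment in the unit ball via Proposition \ref{contractive hom}, weak-star closedness because multiplicativity and $p(e)=1$ are preserved under pointwise limits of nets, and then compactness of a closed subset of the compact unit ball. Your remark about using nets rather than sequences is an appropriate and correctly flagged subtlety.
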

Given a compact Hausdorff topological space $S$ we shall denote by $C(S)$ the Banach space of all continuous complex-valued functions $f:S\to\mathbb{C}$ with the usual norm 
$$\|f\|_{C(S)}=\max_{s\in S}|f(s)|.$$
As an application of the results above, we obtain a map from $\mathcal{L}$ into $C(\mathcal{M}(\mathcal{L}))$ defined by 
$$\mathcal{L}\ni x\to \hat{x}, \quad \hat{x}(p)=p(x).$$
 \begin{mydef} \thlabel{Gelfand-transf} The map $x\to \hat{x}$ is called the Gelfand transform of $\mathcal{L}$.\end{mydef}
The Gelfand transform has the following properties.
\begin{thm}\label{G-transf-prop} (i) $x\to\hat{x}$ is an algebra-homomorphism.\\
(ii) For $x\in \mathcal{L}$ we have  $\hat{x}(\mathcal{M}(\mathcal{L}))=\sigma(x)$.\\
(iii) For $x\in \mathcal{L}$ we have $$\|\hat{x}\|_{C(\mathcal{M}(\mathcal{L}))}\le \|x\|,$$
(iv) If $\mathcal{L}$ satisfies \eqref{sp-radius-norm} then the Gelfand transform is an isometry of $\mathcal{L}$ onto a closed sub-algebra of  $C(\mathcal{M}(\mathcal{L}))$, that is, 
$$\|\hat{x}\|_{C(\mathcal{M}(\mathcal{L}))}= \|x\|, \quad x\in \mathcal{L}.$$
\end{thm}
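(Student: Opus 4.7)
My plan is to dispatch the four parts in order, with (ii) carrying the only real idea and the remaining three reducing to bookkeeping with results already recorded in the excerpt.

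Part (i) is immediate from the definition. Fixing $p\in\mathcal{M}(\mathcal{L})$ and unwinding $\widehat{xy}(p)=p(xy)=p(x)p(y)=\hat{x}(p)\hat{y}(p)$, together with the analogous identities for sums and scalar multiples, uses only that $p$ is itself an algebra homomorphism into $\mathbb{C}$.

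For (ii) I would prove the two containments separately. For $\sigma(x)\subseteq\hat{x}(\mathcal{M}(\mathcal{L}))$, given $\lambda\in\sigma(x)$ the element $\lambda e-x$ is non-invertible. Assuming $x\neq\lambda e$ (the other case is trivial since then $\hat{x}\equiv\lambda$), the principal ideal $(\lambda e-x)\mathcal{L}$ is a proper ideal in the sense of the excerpt: a non-zero linear subspace absorbing multiplication by $\mathcal{L}$, and not all of $\mathcal{L}$ since it cannot contain $e$ (otherwise $\lambda e-x$ would be invertible). Zorn's lemma embeds it in a maximal ideal $\mathcal{I}$, and Corollary \ref{max-ideal} supplies a $p\in\mathcal{M}(\mathcal{L})$ with $\ker p=\mathcal{I}$, giving $\hat{x}(p)=p(x)=\lambda$. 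For the reverse containment, take $p\in\mathcal{M}(\mathcal{L})$ and set $\lambda=p(x)$; then $\lambda e-x\in\ker p$, a maximal ideal, which by part (i) of the ideals lemma contains no invertible element, so $\lambda e-x$ is non-invertible and $\lambda\in\sigma(x)$. This is where the real content sits; the main obstacle is simply remembering to argue one direction by constructing a maximal ideal around $\lambda e-x$ and the other by using that maximal ideals avoid units.

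Part (iii) is a direct consequence of Proposition \ref{contractive hom}: $\|\hat{x}\|_{C(\mathcal{M}(\mathcal{L}))}=\sup_p|p(x)|\le\|x\|$, and combined with (ii) this supremum actually equals the spectral radius $r(x)$. For (iv), the hypothesis $\|x^2\|=\|x\|^2$ iterates to $\|x^{2^n}\|=\|x\|^{2^n}$, and the spectral radius formula $r(x)=\lim_n\|x^n\|^{1/n}$ recorded in the excerpt yields $r(x)=\|x\|$. Hence $\|\hat{x}\|_{C(\mathcal{M}(\mathcal{L}))}=\|x\|$, so the Gelfand transform is a linear isometry; its image, being the isometric image of the complete space $\mathcal{L}$, is closed in $C(\mathcal{M}(\mathcal{L}))$, and by (i) it is a subalgebra.
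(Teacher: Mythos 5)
Your proof is correct. The paper states this theorem in its preliminaries without proof (it is background material from Gelfand theory), so there is nothing to compare against; your argument is the standard one, with the substance correctly located in part (ii) — one inclusion via embedding the proper principal ideal generated by $\lambda e-x$ into a maximal ideal and invoking the correspondence with multiplicative functionals, the other via the fact that maximal ideals contain no invertible elements — and parts (iii)–(iv) following from the contractivity of multiplicative functionals and the spectral radius formula, exactly as the paper's surrounding remarks (in particular the stated equivalence of $\|x^2\|=\|x\|^2$ with $r(x)=\|x\|$) anticipate.
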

Part (iv) of the above theorem has an  application which will play a central role in the sequel,. The Hahn-Banach theorem combined with the Riesz-Markov-Kakutani theorem yield a very useful repressentation of continuous linear functionals on such Banach algebras, since they can be extended to continuous linear functional on the full space $C(\mathcal{M}(\mathcal{L}))$.
\begin{cor}\label{meas-repr} For every $l\in \mathcal{L}'$ there exists a finite Borel measure   $\mu$ on  the compact space $\mathcal{M}(\mathcal{L})$, with  total variation $\|l\|$, such that 
$$l(x)=\int\hat{x}d\mu,\quad x\in \mathcal{L}.$$
\end{cor}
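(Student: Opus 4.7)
The plan is to combine the isometric embedding furnished by Theorem~\ref{G-transf-prop}(iv) with the Hahn--Banach extension theorem (Theorem~\ref{Hahn-Banach}) and the Riesz--Markov--Kakutani representation theorem, in that order. The idea is that we know how to represent functionals on all of $C(\mathcal{M}(\mathcal{L}))$ via measures, so we first transport $l$ to a functional on the subalgebra $\widehat{\mathcal{L}}=\{\hat{x}:x\in\mathcal{L}\}\subset C(\mathcal{M}(\mathcal{L}))$, extend it to the whole $C(\mathcal{M}(\mathcal{L}))$ without increasing the norm, and then read off the measure.

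More precisely, first I would define $\tilde{l}:\widehat{\mathcal{L}}\to\mathbb{C}$ by $\tilde{l}(\hat{x})=l(x)$. This is well-defined and linear because the Gelfand transform is an algebra homomorphism, and crucially injective by Theorem~\ref{G-transf-prop}(iv); the same isometry statement gives $|\tilde{l}(\hat{x})|=|l(x)|\le \|l\|\,\|x\|=\|l\|\,\|\hat{x}\|_{C(\mathcal{M}(\mathcal{L}))}$, so $\tilde{l}$ is continuous on $\widehat{\mathcal{L}}$ with $\|\tilde{l}\|=\|l\|$. Then I would apply the complex Hahn--Banach theorem with the sublinear majorant $p(f)=\|l\|\cdot\|f\|_{C(\mathcal{M}(\mathcal{L}))}$ to extend $\tilde{l}$ to a continuous linear functional $L$ on all of $C(\mathcal{M}(\mathcal{L}))$ satisfying $\|L\|=\|l\|$.

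At this stage the Riesz--Markov--Kakutani theorem, applied to the compact Hausdorff space $\mathcal{M}(\mathcal{L})$ (which is compact by Theorem~\ref{max-id-compact}), produces a unique regular finite complex Borel measure $\mu$ on $\mathcal{M}(\mathcal{L})$ with total variation $\|\mu\|=\|L\|=\|l\|$ such that
$$L(f)=\int f\,d\mu,\qquad f\in C(\mathcal{M}(\mathcal{L})).$$
Restricting to $\widehat{\mathcal{L}}$ gives, for each $x\in\mathcal{L}$,
$$l(x)=\tilde{l}(\hat{x})=L(\hat{x})=\int\hat{x}\,d\mu,$$
which is exactly the desired representation, and the total variation is $\|l\|$ as claimed.

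The only genuine subtlety, and what I would treat as the main obstacle, is the equality (rather than just inequality) between $\|\mu\|$ and $\|l\|$. The chain $\|l\|=\|\tilde{l}\|\le\|L\|=\|\mu\|$ is automatic, and the reverse inequality follows because $\mu$ represents an extension of $\tilde{l}$ whose norm Hahn--Banach has pinned down to $\|\tilde{l}\|$; one should note that this forces us to use the norm-preserving form of Hahn--Banach with the specific majorant above, not just an unrestricted extension, and that the injectivity and isometry of the Gelfand transform from Theorem~\ref{G-transf-prop}(iv) are essential for $\tilde{l}$ to be defined and to have the same norm as $l$ in the first place.
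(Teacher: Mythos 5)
Your proposal is correct and follows exactly the route the paper indicates (the paper only sketches it in the sentence preceding the corollary): use the isometry of the Gelfand transform from Theorem~\ref{G-transf-prop}(iv) to transport $l$ to the closed subalgebra $\widehat{\mathcal{L}}$ of $C(\mathcal{M}(\mathcal{L}))$, extend norm-preservingly by Hahn--Banach, and apply Riesz--Markov--Kakutani. Your remarks on why the isometry (not just contractivity) is needed for $\tilde{l}$ to be well-defined with the right norm, and on how the equality of total variation with $\|l\|$ is forced, are exactly the right points to emphasize.
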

We close this paragraph with two relevant examples of maximal ideal spaces. 
\begin{ex}\label{max-id-1} Let $\mathcal{L}=C([a,b])$ be the Banach algebra of continuous functions on $[a,b]\subset\mathbb{R}$ with the usual supremum norm. Then clearly, for each $c\in [a,b]$, the evaluation map $p_c(f)= f(c),~f\in C([a,b])$ is a non-zero
multiplicative linear functional. We claim that $\mathcal{M}(C([a,b]))=\{p_c:~c\in [a,b]\}$. To this end, let $p\in \mathcal{M}(C([a,b]))$, let $f_0\in C([a,b])$ be the identity function $f_0(t)=t$ and apply Theorem \ref{G-transf-prop} (ii) to conclude that $c=p(f_0)\in \sigma(f_0)=[a,b]$. But then, for any polynomial $g$, we have $p(g)=g(c)$, and since polynomials are dense in $C([a,b])$, we obtain $p(f)=f(c),~f\in C([a,b])$.
\end{ex}

\begin{ex}\label{max-id-2} Let $\mathcal{L}=C_b((a,b))$ be the Banach algebra of {\em bounded} continuous functions on $(a,b)\subset\mathbb{R}$ with the supremum norm
$$\|f\|=\sup_{t\in (a,b)}|f(t)|.$$
Again, for each $c\in (a,b)$, the evaluation map $p_c(f)= f(c),~f\in C([a,b])$ is a non-zero
multiplicative linear functional on this algebra, but $\mathcal{M}(C_b((a,b)))$ contains many other elements determined by the behaviour of these functions near the end points. To see this, let $(c_n)$ be a strictly increasing sequence in $(a,b)$ with $c_n\to b$.
If $f_0$ is the identity function as above, we can cover the set $P=\{p_{c_n}:~n\ge 1\}$ with the disjoint union of neighborhoods
$$V_n=\{l\in (C_b((a,b)))':~|l(f_0)-p_{c_n}(f_0)|<\min\{c_n-c_{n-1},c_{n+1}-c_n\}\},\quad c_0=c_1$$
which implies that $P$ cannot be compact in the weak-star topology. Then $P$ cannot be closed in $\mathcal{M}(C_b((a,b)))$, hence it must have at least an adherent point $p_0\in \mathcal{M}(C_b((a,b)))\setminus  P$. It is easy to verify that $p_0(f_0)=b$, hence $p_0$  is not an evaluation at any point in $(a,b)$.
\end{ex}

\chapter{The Ahlfors function}
This chapter is devoted to  the study of an extremal problem in the algebra of  bounded analytic functions in a general domain. The exposition is based on results from \cite{fisher}.

\begin{mydef} 
Let  $\textbf{S}^2$  denote the Riemann sphere and let $\Omega\subset \textbf{S}^2$ be an open connected set.
$H^{\infty}(\Omega)$ is the space of bounded analytic functions in the domain $\Omega$ with the norm $$\|f\|_\infty=\sup_{z\in \Omega}|f(z)|,\quad f\in H^\infty(\Omega).$$ 
\end{mydef}

It is easy to verify that $H^\infty(\Omega)$ is  a Banach algebra which satisfies 
\begin{equation}\label{square}
    \|f^2\|_\infty=\|f\|_\infty^2,\quad  f\in H^{\infty}(\Omega).
\end{equation}

In what follows wwe shall denote by $H'$ the dual space of $H^{\infty}(\Omega)$ and by $\textgoth{M}$ the maximal ideal space of $H^{\infty}(\Omega)$. Recall from section \ref{linear functionals} that maximal ideals can be identified with non-zero multiplicative linear functions, in particular we regard $\textgoth{M}$ as a subset of the unit sphere of $H'$.   By Theorem \ref{max-id-compact}, $\textgoth{M}$ is compact in the weak star topology on $H'$.

Evaluations at points on $\Omega$ are the
simplest examples of multiplicative linear functionals. The same argument as in Example
\ref{max-id-2} shows that $\textgoth{M}$ contains many other multiplicative linear functionals which appear as
generalized limits of functions in $H^{\infty}(\Omega)$ at points in $\partial\Omega$.

We denote by $C(\textgoth{M})$ the Banach space of all continuous complex-valued functions $u: \textgoth{M}\rightarrow \mathbb{C}$ with the usual norm
\begin{equation}
    \|u\|_{C(\textgoth{M})} = \max_{m\in\textgoth{M}}|u(m)|
\end{equation}
As in section \ref{linear functionals}, we  define  the {\em Gelfand Transform} $\hat{f}\in C(\textgoth{M})$  of $f\in H^{\infty}(\Omega)$  by $$\hat{f}(m)=m(f),\quad m\in \textgoth{M}.$$
 By \eqref{square} and part (iv) of   Theorem \ref{G-transf-prop} it follows that the Gelfand transform is an isometry from 
 $H^{\infty}(\Omega)$ into $C(\textgoth{M})$.            
This leads to the following final remark. Using \ref{meas-repr},  for every $h\in H'$, there exists a finite Borel measure $\mu$,on the compact, Hausdorff topological space $\textgoth{M}$, with total variation $\|h\|$, such that one can write
\begin{equation} \label{representation}
    h(x) = \int_{\textgoth{M}}\hat{x}d\mu, x\in H^{\infty}(\Omega)
\end{equation}               

\section{Existence of the Ahlfors function}
We now turn to an important question regarding the existence of non-constant  functions in $H^{\infty}(\Omega)$. In the case of constants, we say that $H^\infty(\Omega)$ is trivial. The question whether $H^\infty(\Omega)$ is trivial or not is difficult to answer for arbitrary domains in $\mathbb{C}$, or $\textbf{S}^2$. Using a conformal, map we can always assume that the complement of the domain in question is a compact subset of $\mathbb{C}$. The ''amount'' of non-constant bounded analytic functions outside a fixed compact set is measured by its {\em  analytic capacity}, which we now define.

\begin{mydef}\label{defcapdef}
Let $B$ be a compact set in the extended complex plane. Then,
\begin{equation} \label{defcap}
    \gamma(B) = \sup\{|f'(\infty)|:f\in H^{\infty}(\textbf{S}^2\setminus B), \|f\|_{\infty}\leq 1, f(\infty)=0\}
\end{equation} 
is the analytic capacity of $B$, and $f'(\infty):=\lim_{z\rightarrow\infty}z(f(z)-f(\infty))$.
\end{mydef}
We shall later prove that $H^{\infty}(\Omega)$ is either a space of constants or a non-separable space.

We are interested in solutions of the extremal problems similar to the one used in the definition of   analytic capacity.  More precisely,  given $p\in\Omega$ fixed, we consider  the  extremal problem

\begin{equation}
    \gamma = \sup\{|h'(p)|: h\in H^{\infty}(\Omega), \Vert{h}\Vert_{\infty} \leq 1\}
\end{equation}

and seek for a function $h\in H^{\infty}(\Omega)$ with
\begin{enumerate}
    \item $h \in H^{\infty}(\Omega)$ \\
    \item $\Vert{h}\Vert_{\infty}=1$ \\
    \item $h'(p) = \gamma$
\end{enumerate}
We shall call such functions $\textit{extremal}$.

\begin{lem}
At least one extremal exists.
\end{lem}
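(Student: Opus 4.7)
The plan is to produce an extremal as a normal limit of a maximizing sequence, using Montel's theorem to extract a subsequence and Cauchy's integral formula to transfer the derivative value to the limit.

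If $\gamma=0$ then any constant function of modulus $1$ already satisfies all three extremal conditions, so I would immediately reduce to the case $\gamma>0$. Picking a sequence $(h_n)\subset H^{\infty}(\Omega)$ with $\|h_n\|_{\infty}\le 1$ and $|h_n'(p)|\to\gamma$, I would multiply each $h_n$ by a suitable unimodular constant so that the complex numbers $h_n'(p)$ themselves converge to $\gamma$; this uses nothing but the invariance of $\|\cdot\|_{\infty}$ under rotation.

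Next, since $(h_n)$ is uniformly bounded by $1$ on the whole of $\Omega$, Montel's theorem produces a subsequence $(h_{n_k})$ converging uniformly on every compact subset of $\Omega$ to some holomorphic function $h:\Omega\to\mathbb{C}$. Pointwise convergence gives $\|h\|_{\infty}\le 1$, so $h\in H^{\infty}(\Omega)$. To see that $h'(p)=\gamma$, I would fix $r>0$ small enough that the closed disc $\overline{D(p,r)}$ lies in $\Omega$ and invoke Cauchy's formula
$$h_{n_k}'(p)\;=\;\frac{1}{2\pi i}\oint_{|z-p|=r}\frac{h_{n_k}(z)}{(z-p)^2}\,dz,$$
whose right-hand side converges to $h'(p)$ by uniform convergence on the circle $|z-p|=r$; thus $h'(p)=\gamma$.

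To finish I would verify $\|h\|_{\infty}=1$: were $\|h\|_{\infty}<1$, the rescaled function $h/\|h\|_{\infty}$ would be admissible with derivative at $p$ of modulus $\gamma/\|h\|_{\infty}>\gamma$, contradicting the definition of $\gamma$. The only genuinely delicate point is the passage from convergence of the $h_{n_k}$ to convergence of their derivatives at $p$, but Cauchy's formula reduces this to uniform convergence on an arbitrarily small circle around $p$, which is exactly what the Montel step supplies. (If $p=\infty$, a case relevant for the analytic-capacity problem, an entirely analogous argument goes through in the local coordinate $w=1/z$ using the limit definition of $f'(\infty)$ given in Definition \ref{defcapdef}.)
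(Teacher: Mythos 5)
Your proposal is correct and follows essentially the same normal-families argument as the paper: extract a locally uniformly convergent subsequence from a maximizing sequence via Montel, pass the derivative at $p$ to the limit, and rule out $\|h\|_\infty<1$ by rescaling. Your explicit use of Cauchy's formula is just an unpacking of the standard fact the paper invokes (locally uniform convergence of analytic functions implies convergence of derivatives), and your preliminary rotation making $h_n'(p)\to\gamma$ as complex numbers is a small but welcome refinement, since the paper's version only yields $|h'(p)|=\gamma$ and leaves the final unimodular adjustment implicit.
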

\begin{proof}
We use a normal families argument. First, we assume that $\gamma$ is non-zero. Otherwise, any constant would be an extremal. Since $\gamma$ is defined as a supremum at a given point p, there exists a sequence $(h_n)$ in $H^{\infty}(\Omega)$ with $\|h_n\|_{\infty}\leq 1$ such that $\lim_{n\rightarrow\infty}|h_n'(p)| = \gamma$. Note that $\{h_n\}$ forms a normal family because this sequence is uniformly bounded by 1 in $\Omega$. Then, by Montel's theorem, there is a subsequence $h_{n_{k}}(z) \rightarrow h(z)$, uniformly on compact subsets of $\Omega$. Uniform convergence of analytic functions on compact sets implies the uniform convergence of their derivatives on compact sets. Hence,

\begin{equation}
    |h'(p)| = \lim_{n\rightarrow \infty}|h'_{n_{k}}(p)| = \gamma
\end{equation}
Since we assumed $\gamma \neq 0$, $h$ is not constant. We must now show that $\|h\|_{\infty}=1$. By the pointwise convergence of the subsequence $h_{n_{k}}$, since $h$ is the limit function, $h$ is also bounded by 1. Now assume that 
$$\|h\|_{\infty} = \sup_{z\in\Omega}|h(z)| < 1$$
Then, we can divide by this supremum, i.e., if $\|h\|_{\infty} = 1/d$ where $d>1$, 
$$\|h\|_{\infty}/(1/d) = d\|h\|_{\infty} = \|f\|_{\infty}\leq1$$
where $f\in H^{\infty}(\Omega)$ is $dh$, a new function which has a bigger derivative than $h$ and is therefore a candidate for the extremal. This contradiction shows that $\|h\|_{\infty}=1$.

\end{proof}
Let us turn now to uniqueness. For this purpose, we need some notions.
\begin{mydef}
An extreme point of a convex set is a point which does not lie in any open (excluding the endpoints) line segment joining two points of this convex set.
\end{mydef}

\begin{lem} \label{unique}
$F$ is an extreme point of the unit ball of $H^\infty$ if the only function $g\in H^{\infty}(\Omega)$ satisfying $\|F\pm g\|_{\infty}\leq 1$ is $g=0$.
\end{lem}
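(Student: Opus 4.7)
The natural approach is to prove the contrapositive: assume $F$ is \emph{not} an extreme point of the unit ball of $H^{\infty}(\Omega)$, and produce a nonzero $g\in H^{\infty}(\Omega)$ with $\|F\pm g\|_{\infty}\le 1$, contradicting the hypothesis. The content of the lemma is purely convex-geometric; the fact that we work in an algebra of analytic functions plays no role beyond the fact that $H^{\infty}(\Omega)$ is a normed linear space.

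By the definition of extreme point supplied just above the lemma, if $F$ is not extreme then there exist distinct $x,y\in H^{\infty}(\Omega)$ with $\|x\|_{\infty},\|y\|_{\infty}\le 1$ and some $\lambda\in(0,1)$ such that $F=\lambda x+(1-\lambda)y$. I will use this decomposition to manufacture a nonzero admissible $g$. Set $t=\min(\lambda,1-\lambda)>0$ and define $g=t(x-y)$. Since $x\ne y$ and $t>0$, the function $g$ is a nonzero element of $H^{\infty}(\Omega)$.

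It remains to verify the two norm estimates. A direct calculation gives
\begin{equation*}
F+g=(\lambda+t)x+(1-\lambda-t)y,\qquad F-g=(\lambda-t)x+(1-\lambda+t)y.
\end{equation*}
By the choice of $t$, one has $0\le\lambda\pm t\le 1$ and $0\le 1-\lambda\mp t\le 1$, and in each case the two coefficients sum to $1$. Thus $F+g$ and $F-g$ are both convex combinations of $x$ and $y$, and the triangle inequality (i.e. convexity of the closed unit ball) yields $\|F\pm g\|_{\infty}\le 1$. This contradicts the assumption that $g=0$ is the only element of $H^{\infty}(\Omega)$ satisfying $\|F\pm g\|_{\infty}\le 1$, so $F$ must be extreme.

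I do not anticipate any real obstacle: the only small point requiring attention is choosing the perturbation size so that $F\pm g$ still lie on the segment between $x$ and $y$, which is what the factor $t=\min(\lambda,1-\lambda)$ accomplishes. (The converse implication, which is not asserted in the statement but is equally standard, is even shorter: if $\|F\pm g\|_{\infty}\le 1$ with $g\ne 0$, then $F=\tfrac12(F+g)+\tfrac12(F-g)$ exhibits $F$ as a midpoint of two distinct points of the unit ball, so $F$ is not extreme.)
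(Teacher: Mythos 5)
Your proof is correct and takes essentially the same approach as the paper: the paper also establishes the stated direction by writing a non-extreme $F$ as $tF_1+(1-t)F_2$ and perturbing by $g=F_1-F=(1-t)(F_1-F_2)$ after assuming w.l.o.g.\ $t>1/2$, which is precisely your $g=t(x-y)$ with $t=\min(\lambda,1-\lambda)$. Your symmetric choice of the perturbation size just streamlines the paper's case normalization and its norm estimate for $F-g$, and your parenthetical converse (writing $F=\tfrac12(F+g)+\tfrac12(F-g)$) is exactly the paper's first paragraph.
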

\begin{proof}
Assume that $F$ is an extreme point of the unit ball of $H^{\infty}(\Omega)$. Then, if $g\in H^{\infty}(\Omega)$ with $\|F\pm g\|_{\infty}\leq 1$, we have $$F=\frac{F+g}{2}+\frac{F-g}{2},$$
so that $F$ cannot be an extreme point unless $F+g=F-g$, i.e. $g=0$. To prove the converse, assume that $F$ is not an extremal. Then, we can rewrite it as a convex combination of two other functions with norm at most 1, i.e.,

$$F = tF_1 +(1-t)F_2$$
and
$$\|F_1\|_{\infty}=1,\|F_2\|_{\infty}=1$$
where $t\in(0,1)$. Assume, w.l.o.g., that $t>1/2$. Thus, we have that $F$ is closer to $F_1$. We want to show the existence of some non-vanishing $g\in H^{\infty}(\Omega)$ satisfying $\|F\pm g\|_{\infty}\leq 1$. Thus, let $g = F_1 - F$. Then,

$$\|F + g\|_{\infty} = \|tF_1 + (1-t)F_2 + F_1 - F\|_{\infty} = \|F_1\|_{\infty}\leq 1$$
since $F_1$ is inside of the unit ball of $H^{\infty}(\Omega)$. Also,

$$\|F - g\|_{\infty} = \|tF_1 + (1-t)F_2 - F_1 + F\|_{\infty} = \|2F-F_1\|_{\infty}$$ 
$$= \|2((t-1/2)F_1)+ (1-t)F_2\|_{\infty} \leq \|2|t-1/2|+2|1-t|\|_{\infty}\leq 1$$
because $2|t-1/2|+2|1-t|=1$ for $t\in(1/2, 1)$.
\end{proof}

\begin{thm}
There is a unique extremal and it vanishes at the point $p$. This extremal is called the $\textit{Ahlfors function}$ corresponding to $\Omega$ and $p$.
\end{thm}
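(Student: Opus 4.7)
The plan is to establish both claims separately: the vanishing at $p$ and the uniqueness. For the vanishing, I argue by contradiction: if $F$ is an extremal with $a := F(p) \neq 0$, then the disk automorphism $\phi_a(w) = (w-a)/(1-\bar a w)$ self-maps $\overline{\mathbb{D}}$ sending $a$ to $0$, so $G := \phi_a \circ F \in H^\infty(\Omega)$ satisfies $\|G\|_\infty \leq 1$, $G(p) = 0$, and a direct computation gives $G'(p) = \phi_a'(a) F'(p) = \gamma/(1-|a|^2) > \gamma$, contradicting the maximality of $\gamma$.

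For uniqueness, given two extremals $F, G$, the midpoint $h = (F+G)/2$ is again extremal and satisfies $\|h \pm (F-G)/2\|_\infty \leq 1$; by Lemma \ref{unique} it then suffices to show that every extremal is an extreme point of the unit ball. So I assume $F$ is extremal and $\|F \pm g\|_\infty \leq 1$, and aim to prove $g \equiv 0$. The functional $L(h) = h'(p)$ is bounded on $H^\infty(\Omega)$ with $\|L\| = \gamma$ (Cauchy's estimate), so by Corollary \ref{meas-repr} it is represented by a measure $\mu$ on $\textgoth{M}$ with $|\mu|(\textgoth{M}) = \gamma$; write $d\mu = \phi\, d|\mu|$ with $|\phi|=1$. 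The identity $\gamma = L(F) = \int \hat F \phi\, d|\mu|$ together with $|\hat F| \leq 1$ forces $\hat F \phi = 1$ $|\mu|$-a.e. Then from $|\hat F \pm \hat g| \leq 1$ on $\textgoth{M}$, multiplying by $\phi$ yields $|1 \pm \hat g \phi| \leq 1$ $|\mu|$-a.e., and splitting $\hat g\phi = u + iv$ into real and imaginary parts quickly forces $u = v = 0$, so $\hat g = 0$ $|\mu|$-a.e.

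The main obstacle is passing from this abstract condition on the maximal ideal space to the concrete conclusion $g \equiv 0$ on $\Omega$. I would overcome it with two observations. First, applying $L$ to $Fh$ gives $\gamma h(p) = (Fh)'(p) = \int \hat F \hat h\, d\mu = \int \hat h\, d|\mu|$ (using $\hat F \phi = 1$ $|\mu|$-a.e.), so $|\mu|/\gamma$ is a representing measure for evaluation at $p$. Second, factoring $F = (z-p) F_1$ with $F_1 \in H^\infty(\Omega)$ (valid since $F(p) = 0$), the fiber $T := \{m \in \textgoth{M} : m(z-p) = 0\}$ satisfies $\hat F \equiv 0$ on $T$, and since $|\hat F| = 1$ $|\mu|$-a.e.\ this forces $|\mu|(T) = 0$. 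An induction on $n$ now shows $g^{(n)}(p) = 0$ for every $n \geq 0$: once $g$ vanishes to order $\geq n$ at $p$, the quotient $\tilde g := g/(z-p)^n$ lies in $H^\infty(\Omega)$ and satisfies $\widehat{\tilde g} = \hat g / \widehat{(z-p)^n} = 0$ off $T$, hence $|\mu|$-a.e.; therefore $\tilde g(p) = \int \widehat{\tilde g}\, d|\mu|/\gamma = 0$, i.e.\ $g^{(n)}(p) = 0$. By analytic continuation in the connected domain $\Omega$, $g \equiv 0$, which completes the proof that $F$ is an extreme point, and hence uniqueness.
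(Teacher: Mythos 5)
Your first half (vanishing at $p$ via the disk automorphism $\phi_a$ and the Schwarz--Pick computation $G'(p)=\gamma/(1-|a|^2)>\gamma$) is exactly the paper's argument. For uniqueness, you and the paper both reduce to showing that an extremal is an extreme point of the unit ball via Lemma \ref{unique}, but from there you diverge completely. The paper stays inside function theory on $\Omega$: from $\|F\pm g\|_\infty\le 1$ it derives the pointwise inequality $|F|+\tfrac12|g|^2\le 1$, and then kills $h=\tfrac12 g^2$ by a variational perturbation $G=F+\varepsilon h(z-p)^{1-r}$ whose derivative at $p$ would exceed $\gamma$ while $\|G\|_\infty\le 1$ by the maximum principle. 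You instead run the dual extremal problem on the maximal ideal space: represent $L(h)=h'(p)$ by a measure of total variation $\gamma$, extract $\hat F\phi=1$ $|\mu|$-a.e.\ from the equality case, use strict convexity of the disk to force $\hat g=0$ $|\mu|$-a.e., observe that $|\mu|/\gamma$ represents evaluation at $p$, and induct on the order of vanishing. Your route is heavier (it needs the Gelfand isometry, Hahn--Banach, and Riesz representation, all of which the paper has set up anyway), but it buys genuinely more: it exhibits the extremal measure for the derivative functional, shows $|\hat F|=1$ on its support, and is the template for general linear extremal problems in $H^\infty$; the paper's argument is more elementary and self-contained. Both are valid proofs.

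One technical wrinkle in your write-up: when $\Omega$ is unbounded (the paper explicitly allows $\Omega\subset\textbf{S}^2$, even $p=\infty$), the function $z-p$ is not in $H^\infty(\Omega)$, so $m(z-p)$, the fiber $T$, and the quotient $\hat g/\widehat{(z-p)^n}$ are not literally defined. This is easily repaired without $T$: writing $F=(z-p)F_1$ with $F_1\in H^\infty(\Omega)$ and $\tilde g=g/(z-p)^n\in H^\infty(\Omega)$, the identity $gF_1^{\,n}=\tilde g F^n$ holds between elements of $H^\infty(\Omega)$, so $\hat g\,\hat F_1^{\,n}=\widehat{\tilde g}\,\hat F^{\,n}$ on $\textgoth{M}$; since $\hat g=0$ and $|\hat F|=1$ $|\mu|$-a.e., you get $\widehat{\tilde g}=0$ $|\mu|$-a.e.\ directly. (If $p=\infty$ one replaces $z-p$ by $1/z$ throughout, a point the paper's own proof also glosses over.) With that adjustment your argument is complete.
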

\begin{proof}
First, we show that $F(p) = 0$. To this end, define the auxiliary function 
\begin{equation}
    g(z) = [F(z) - F(p)][1- F(z)\overline{F(p)}]^{-1}, z\in\Omega
\end{equation}
obtained by composing a conformal automorphism of the unit disc with $F$. In particular, it follows that $\|g\|_\infty=1$. We compute the derivative of $g$, which yields
\begin{equation}
    g'(z) = \frac{F'(z)(1- F(z)\overline{F(p)})+(F(z)-F(p))(\overline{F(p)}F'(z))}{(1- F(z)\overline{F(p)})^2} 
\end{equation}
\begin{equation}
    = \frac{F'(z)}{(1- F(z)\overline{F(p)})} + \frac{(F(z)-F(p))}{(1- F(z)\overline{F(p)})}\frac{(\overline{F(p)}F'(z))}{(1- F(z)\overline{F(p)})} 
\end{equation}
We can calculate $g'(z)$ at the point $p$.
\begin{equation}
    g'(p) = \frac{F'(p)}{1 - |F(p)|^2} = \gamma(1 - |F(p)|^2)^{-1}
\end{equation}
Since $\|g\|_\infty=1$, we must have $|g'(p)|\le\gamma$, which obviously implies that $F(p)=0$.
\newline
\newline
The second part of the proof is devoted to the uniqueness assertion. To this end, it will be sufficient to prove that the extremal function $F$ is an extreme point of the unit ball of $H^\infty(\Omega)$, by Lemma \ref{unique}.  Indeed, assume this for a momment and suppose that  $F_1$ and $F_2$ are extremals, then so is $(F_1+F_2)/2$, which, by definition cannot be an extremal point of the unit ball of $H^\infty(\Omega)$. Let then $g\in H^{\infty}(\Omega)$ satisfy $\|F \pm g\|_{\infty}\leq 1$. Now squaring gives us

\begin{equation}
    |F|^2 \pm 2ReF\Bar{g} + |g|^2 \leq 1
\end{equation}
and so
\begin{equation}
    |F|^2 + |g|^2 \leq 1
\end{equation}
Rewriting gives us the following inequality:
\begin{equation}
    |g|^2 \leq 1 - |F|^2 = (1 + |F|)(1 - |F|) \leq 2(1 - |F|)
\end{equation}
Recall that our aim here is to show that $g$ vanishes. Thus, define $h = \frac{1}{2}(g^2)$. Then, clearly, $h\in H^{\infty}$ and
\begin{equation}
    |F| + |h| \leq 1, \quad on \quad \Omega \label{hvan}
\end{equation}
The proof now consists of showing that \eqref{hvan} implies $h$ vanishes identically and this will show that $g$ does too.

First, $h(p) = 0$, which follows because otherwise, for some $\lambda$, s.t. $|\lambda| = 1$, we would have that

\begin{equation}
    (F + \lambda Fh)'(p) = F'(p) + \lambda F(p)h'(p) + \lambda F'(p)h(p) = 
\end{equation}
\begin{equation}
    = F'(p) + \lambda F'(p)h(p) > F'(p) = \gamma
\end{equation}
which is a contradiction, since
\begin{equation}
    |F + \lambda Fh| \leq |F| + |F||h| \leq |F| + |h| \leq 1
\end{equation}
Now assume that $h$ does not vanish identically. Let $r$ be the order of its zero at $p$, s.t., $r\geq 1$. Pick $\varepsilon$ to be a small complex number and let
\begin{equation}
    F(z) + \varepsilon h(z)(z-p)^{-r+1} = G(z)
\end{equation}
Now, the absolute value of the derivative of $G$ at the point $p$ is 
\begin{equation} \label{deriv}
    |G'(p)| = |\gamma + \varepsilon h^{(r)}(p)/r! | > \gamma
\end{equation}
for an appropriate choice of the argument of $\varepsilon$. We obtain equation \eqref{deriv} by using a Taylor series expansion. Outside of a neighbourhood of $p$, $|\varepsilon||z-p|^{1-r} < 1$ and by the maximum modulus principle, $|G|\leq 1$. This contradicts the previous statement that $|G'(p)|>\gamma$. Thus, $h\equiv 0$ and the proof is complete.
\end{proof}

\section{Properties of the Ahlfors function}
We are going to study some properties of the Ahlfors function which hold in the most general context.
\begin{prop}
If $H^{\infty}(\Omega)$ is non-trivial, then the Ahlfors function for the corresponding point $p$ is non-constant.
\end{prop}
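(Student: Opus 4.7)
The Ahlfors function $F$ satisfies $F'(p) = \gamma$, so $F$ is non-constant precisely when $\gamma > 0$. The plan is therefore to manufacture, from any non-constant $f \in H^{\infty}(\Omega)$, some $\phi \in H^{\infty}(\Omega)$ with $\phi'(p) \neq 0$; the definition of $\gamma$ will then force
\[
\gamma \ge \frac{|\phi'(p)|}{\|\phi\|_\infty} > 0,
\]
and we are done.

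First I would replace $f$ by $f - f(p)$, which still lies in $H^{\infty}(\Omega)$ and now vanishes at $p$. Since $f$ is non-constant, its Taylor expansion at $p$ has a smallest non-zero term, say of order $n \ge 1$, so that locally
\[
f(z) - f(p) = a_n(z-p)^n + a_{n+1}(z-p)^{n+1} + \cdots, \qquad a_n \ne 0.
\]
The key move is the division trick: set
\[
\phi(z) = \frac{f(z) - f(p)}{(z-p)^{n-1}}, \qquad z \in \Omega\setminus\{p\},
\]
and extend $\phi$ analytically across $p$, which is legitimate because the numerator has a zero of order exactly $n$ there. By construction $\phi(p) = 0$ and $\phi'(p) = a_n \ne 0$.

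The one step that requires care, and which I expect to be the main obstacle, is checking that $\phi$ is globally bounded on $\Omega$ and not merely near $p$, since $1/(z-p)^{n-1}$ is singular at $p$. Here I would use that $p$ is an interior point of $\Omega$ and fix $r > 0$ with $\overline{D(p,r)} \subset \Omega$. On the compact set $\overline{D(p,r)}$ the analytic extension of $\phi$ is continuous, hence bounded; on $\Omega \setminus D(p,r)$ one has $|z - p| \ge r$, giving the crude estimate $|\phi(z)| \le 2\|f\|_\infty / r^{n-1}$. Together these bounds place $\phi$ in $H^{\infty}(\Omega)$. The whole design of the division trick is that the only singularity of $(z-p)^{-(n-1)}$ sits at the interior point $p$, where it is cancelled by the high-order zero of $f - f(p)$, while away from $p$ the denominator is uniformly bounded below. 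Once $\phi$ is in hand the argument concludes as indicated above.
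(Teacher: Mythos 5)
Your proposal is correct and follows essentially the same route as the paper: subtract $f(p)$ from a non-constant $f$, note the zero of order $n\ge 1$ at $p$, and divide by $(z-p)^{n-1}$ to produce a function with a simple zero, hence non-vanishing derivative, at $p$. You additionally verify the global boundedness of the quotient (compactness near $p$, the lower bound $|z-p|\ge r$ away from $p$), a detail the paper's proof passes over silently.
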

\begin{proof}
Assume that $H^{\infty}(\Omega)$ is not trivial. It will be sufficient to show that there is at least one function $f\in H^{\infty}(\Omega)$ with a non-zero derivative at the point $p$, since this implies that

$$\Big|\frac{f'(p)}{\|f\|_{\infty}}\Big|>0$$ and $$\Big\|\frac{f}{\|f\|_{\infty}}\Big\|_{\infty}=1$$.

Hence, take $f\in H^{\infty}(\Omega)$, and define $g(z)=f(z)-f(p)$ for some $p\in\Omega$. Then, $g(z)\in H^{\infty}(\Omega)$ has a zero of order $n\geq 1$. If $n=1$, $g'(z)\neq 0$ and we are done. Now suppose that $n>1$. We can divide by $(z-p)^{1-n}$ so that $g'(z)\neq 0$. Hence, we have shown that there is a function in $H^{\infty}(\Omega)$ with a non-zero derivative.
\end{proof}

Note  that if $H^\infty(\Omega)$ is non-trivial then, from above, we have that the Ahlfors function $F$ corresponding to any $p\in \Omega$ is non-constant with modulus bounded by $1$, hence it satisfies $F(\Omega)\subset\mathbb{D}$.

\begin{cor} \label{almost surjective}
Assume that $H^{\infty}(\Omega)$ is not trivial. Fix $p\in\Omega$ and let $F$ be the corresponding Ahlfors function. If $G\subsetneq\mathbb{D}$ is any simply connected domain, then $F(\Omega)\setminus G \neq\emptyset$.
\end{cor}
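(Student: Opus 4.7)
The plan is to argue by contradiction: suppose that $F(\Omega)\subset G$, and use the simple connectedness of $G\subsetneq\mathbb{D}$ together with the Riemann mapping theorem to build a bounded analytic function on $\Omega$ whose derivative at $p$ exceeds $\gamma$, contradicting the extremality of $F$. Recall that from the previous theorem we know $F(p)=0$, and since $F$ is non-constant with $\|F\|_\infty\le 1$, the open mapping theorem combined with the maximum modulus principle guarantees that $F(\Omega)$ is actually contained in the open disc $\mathbb{D}$. So under the contradiction hypothesis we have $0=F(p)\in F(\Omega)\subset G\subsetneq\mathbb{D}$.

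Next I would apply the Riemann mapping theorem (Theorem \ref{riemannmap}) to the simply connected domain $G$ to obtain a conformal bijection $\psi_0:G\to\mathbb{D}$. Composing with a Möbius automorphism of $\mathbb{D}$ that sends $\psi_0(0)$ to $0$, I obtain a conformal bijection $\psi:G\to\mathbb{D}$ with $\psi(0)=0$. The key geometric estimate is that $|\psi'(0)|>1$, which follows from the Schwarz lemma (Lemma \ref{schwarz}) applied to the inverse: $\psi^{-1}:\mathbb{D}\to G\subsetneq\mathbb{D}$ is a holomorphic self-map of $\mathbb{D}$ fixing $0$, so $|(\psi^{-1})'(0)|\le 1$, with equality forcing $\psi^{-1}$ to be a rotation and hence $G=\mathbb{D}$, contrary to hypothesis. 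Therefore $|(\psi^{-1})'(0)|<1$, which gives $|\psi'(0)|>1$.

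To close the argument I consider $h:=\psi\circ F:\Omega\to\mathbb{D}$, which is well-defined because $F(\Omega)\subset G$. Then $h\in H^\infty(\Omega)$ with $\|h\|_\infty\le 1$, and by the chain rule
\begin{equation*}
h'(p)=\psi'(F(p))\,F'(p)=\psi'(0)\,\gamma,
\end{equation*}
so $|h'(p)|=|\psi'(0)|\,\gamma>\gamma$. This contradicts the definition of $\gamma$ as the supremum of $|h'(p)|$ over the unit ball of $H^\infty(\Omega)$, so our assumption $F(\Omega)\subset G$ was false, and $F(\Omega)\setminus G\ne\emptyset$.

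I do not expect any serious obstacle here; the main subtlety is making sure the composition $\psi\circ F$ is legitimate, which requires knowing that $F(\Omega)$ stays strictly inside $\mathbb{D}$ (so it lies inside the domain $G$ of $\psi$), and this is handled by the open mapping theorem applied to the non-constant $F$.
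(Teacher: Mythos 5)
Your proof is correct, but it takes a genuinely different route from the paper. You assume $F(\Omega)\subset G$, invoke the Riemann mapping theorem on $G$ to get a conformal $\psi:G\to\mathbb{D}$ with $\psi(0)=0$, and apply the Schwarz lemma to $\psi^{-1}:\mathbb{D}\to G\subsetneq\mathbb{D}$ to conclude $|\psi'(0)|>1$, so that $\psi\circ F$ beats $F$ at $p$ --- a clean three-step contradiction. The paper deliberately avoids using the Riemann mapping theorem as a black box: it instead reproduces the Koebe ``expansion step'' from the proof of that theorem, picking $a\in\mathbb{D}\setminus G$, composing $f$ with the automorphism $h_a$, a branch of the square root on the simply connected zero-free domain $h_a(G)$, and another automorphism $h_{g(a)}$ to produce a competitor $H$ with $H(p)=0$; it then factors $f=\Phi\circ H$ with $\Phi(0)=0$ non-injective and applies the Schwarz lemma to $\Phi$ to get $|f'(p)|<|H'(p)|$. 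Your argument is shorter and conceptually transparent, at the cost of importing the full strength of the Riemann mapping theorem (which the paper does have available as Theorem \ref{riemannmap}, so this is legitimate); the paper's version is more self-contained and makes visible that the corollary is exactly the expansion phenomenon underlying the Riemann mapping theorem. The two points you rightly flag --- that $F(\Omega)\subset\mathbb{D}$ (maximum modulus, since $F$ is non-constant) and that $0=F(p)\in G$ under the contradiction hypothesis so that $\psi(0)$ makes sense --- are handled correctly, and the final step needs $\gamma>0$, which holds since $H^\infty(\Omega)$ is non-trivial.
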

\begin{proof}
First, let $f$ be analytic in $\Omega$ such that $f(p)=0$ and such that $f(\Omega)\subset G \subsetneq\mathbb{D}$, where $G$ is simply connected. Following the idea in the proof of the Riemann mapping theorem, we will find a function $H$, such that $H(p)=0$ but $|H'(p)|>|f'(p)|$. If we show this, then because the Ahlfors function maximizes the derivative, it cannot be that $F(\Omega)\subset G$ for any proper simply connected subset $G$ of the unit disk.

Let us construct $H$. Let $a\in \mathbb{D}\setminus G$. Then by assumption we have $f(z)\neq a$, $z\in\Omega$. Consider the automorphism $h_a$ of the disk that interchanges $0$ and $a$,
\begin{equation}
    h_a(z) = \frac{a - z}{1 - \overline{a}z}
\end{equation}
Then, we have that $h_a(G)$ is simply connected and does not contain 0. We can now define the analytic branch of the square root function on $h_a(G)$.

\begin{equation}
    g(w) = e^{\frac{1}{2}\log(w)}, \quad w\in h_{a}(G)
\end{equation}
Using this, we form the function 
\begin{equation}
  H = h_{g(a)}\circ g\circ h_a\circ f  
\end{equation}

where 

\begin{equation}
  h_{g(a)}(z) = \frac{e^{\frac{1}{2}\log(a)}-z}{1 - {e^{\frac{1}{2}\overline{\log(a)}}}z}  
\end{equation}
and where it is easy to see that $H(p)=0$.
We compute the derivative of $H$ at the point $p$. Let
$u(w)=w^2$ and 
$$\Phi = h_{a}^{-1}\circ u\circ h_{g(a)}^{-1}$$
Then,

$$f = \Phi\circ H$$
Taking the derivative gives us 
$$f' = (\Phi'\circ H)H'$$
so at $p$, we have
$$f'(p) = \Phi'(0)H'(p)$$
Note that $\Phi(0)=0$. Additionally, since $u$ is the square function, it is not injective and therefore $\Phi$ is also not injective. Hence, $\Phi(z) \neq cz$ for $|c|=1$, i.e., $\Phi$ is not a rotation. Thus, it follows by the Schwarz lemma (Lemma \ref{schwarz}) that $|\Phi'(0)|<1$ and 
$$|f'(p)|<|H'(p)|.$$
We have concluded that there exists a function $H$ with derivative at $p$ bigger than that of $f$, and this gives us the "almost" surjectivity of the Ahlfors function.
\end{proof}

Using this together with Theorem \ref{Fatou}, we arrive at the following remarkable property of the Ahlfors function.
\begin{thm}\label{composition} Assume that $H^\infty(\Omega)$ is not trivial,  fix $p\in \Omega$, and let $F$ denote the corresponding Ahlfors function.  Then for every $f\in H^\infty=H^\infty(\mathbb{D})$ we have 
$$\|f\circ F\|_\infty=\|f\|_\infty.$$
\end{thm}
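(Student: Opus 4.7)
The inequality $\|f\circ F\|_\infty \le \|f\|_\infty$ is immediate from $F(\Omega)\subset\mathbb{D}$ and the definition of the supremum norm, so the whole content of the theorem is the reverse inequality. The natural plan is to combine the Fatou boundary behaviour of $f$ with the ``almost surjectivity'' corollary (Corollary \ref{almost surjective}) applied to a cleverly chosen simply connected proper subdomain of $\mathbb{D}$ that avoids a radial segment along which $|f|$ stays close to $\|f\|_\infty$.

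Fix $\varepsilon>0$. By Theorem \ref{Fatou}, the radial limit $f^*(e^{it})=\lim_{r\to 1^-}f(re^{it})$ exists for a.e.\ $t$ and $\|f\|_\infty=\operatorname{ess\,sup}_t|f^*(e^{it})|$. Hence the set
$$E=\{t\in[0,2\pi):~f^*(e^{it})\text{ exists and }|f^*(e^{it})|>\|f\|_\infty-\varepsilon/2\}$$
has positive measure, and in particular is non-empty. Pick $t_0\in E$. Because $f(re^{it_0})\to f^*(e^{it_0})$ as $r\to 1^-$, there is some $r_0\in(0,1)$ such that
$$|f(re^{it_0})|>\|f\|_\infty-\varepsilon\qquad\text{for all }r\in[r_0,1).$$

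Now consider the half-open radial slit $S=\{re^{it_0}:~r_0\le r<1\}\subset\mathbb{D}$ and let $G=\mathbb{D}\setminus S$. Since $S$ runs from an interior point all the way out to the boundary of $\mathbb{D}$, the set $G$ is an open, connected, and simply connected proper subset of $\mathbb{D}$. By Corollary \ref{almost surjective}, $F(\Omega)\not\subset G$, so there must exist some $z\in\Omega$ with $F(z)\in S$. For that $z$ we have $F(z)=re^{it_0}$ with $r\in[r_0,1)$, and therefore
$$|(f\circ F)(z)|=|f(re^{it_0})|>\|f\|_\infty-\varepsilon.$$
Letting $\varepsilon\downarrow 0$ yields $\|f\circ F\|_\infty\ge\|f\|_\infty$ and completes the proof.

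The main substantive step is the choice of $G$: one needs the radial slit to reach the unit circle so that the complement is still simply connected (a slit ending strictly inside $\mathbb{D}$ would leave a ``loop'' around its endpoint and fail to be simply connected, invalidating the application of Corollary \ref{almost surjective}). Everything else is a bookkeeping exercise combining Fatou's theorem with the corollary; the weak-star/Gelfand machinery built up earlier in the chapter is not directly needed here, since the heavy lifting has already been done in proving the ``almost surjectivity'' of $F$.
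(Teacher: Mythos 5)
Your proof is correct and follows essentially the same route as the paper: the inequality $\|f\circ F\|_\infty\le\|f\|_\infty$ from $F(\Omega)\subset\mathbb{D}$, then Fatou's theorem to locate a radial segment on which $|f|$ exceeds $\|f\|_\infty-\varepsilon$, and Corollary \ref{almost surjective} applied to $G=\mathbb{D}$ minus that slit. Your added remark that the slit must reach the unit circle for $G$ to remain simply connected is a detail the paper leaves implicit, but the argument is identical in substance.
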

\begin{proof} 
Let $f\in H^\infty$ and note first that $\|f\circ F\|_\infty\le\|f\|_\infty.$ This is because $F(\Omega)\subset\mathbb{D}$. For the reverse inequality, let 
$\varepsilon>0$. By Theorem \ref{Fatou}, we know that $\exists t\in [0,2\pi),~r_0\in (0,1)$, such that

$$|f(re^{it})|>\|f\|_\infty -\varepsilon,\quad r\in [r_0,1).$$

Now, we can apply Corollary \ref{almost surjective}. Choose $G=\mathbb{D}\setminus \{re^{it}: r\in [r_0,1)\}$. Then, $\exists q\in \Omega$ with $F(q)\in \{re^{it}: r\in [r_0,1)\}$. Hence,

$$\|f\circ F\|_\infty> |f(F(q))| > \|f\|_\infty-\varepsilon.$$
\end{proof}
This gives us, as a consequence, the non-separability of $H^\infty(\Omega)$ whenever this space is non-trivial.

\begin{cor}\label{nonsep}If $H^\infty(\Omega)$ is not trivial, then it is a non-separable Banach algebra.
\end{cor}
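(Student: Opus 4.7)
The plan is to use Theorem \ref{composition} to reduce non-separability of $H^{\infty}(\Omega)$ to non-separability of $H^{\infty}(\mathbb{D})$. Indeed, that theorem tells us that the map $T\colon H^{\infty}(\mathbb{D})\to H^{\infty}(\Omega)$ defined by $Tf=f\circ F$ is a linear isometry (linearity is immediate, and $\|Tf\|_\infty=\|f\|_\infty$ is exactly the content of the theorem). Since every subspace of a separable metric space is separable, if $H^{\infty}(\Omega)$ were separable, then the image $T(H^{\infty}(\mathbb{D}))$ would be separable, hence $H^{\infty}(\mathbb{D})$ would be separable. Thus it suffices to exhibit an uncountable family in the closed unit ball of $H^{\infty}(\mathbb{D})$ whose pairwise distances are uniformly bounded below.

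For this I would use the singular inner functions at boundary points. For each $\alpha\in [0,2\pi)$, set
$$S_\alpha(z)=\exp\!\left(-\frac{e^{i\alpha}+z}{e^{i\alpha}-z}\right),\qquad z\in\mathbb{D}.$$
A short calculation gives $\text{Re}\,\frac{e^{i\alpha}+z}{e^{i\alpha}-z}=\frac{1-|z|^2}{|e^{i\alpha}-z|^2}\ge 0$, so $|S_\alpha(z)|\le 1$ on $\mathbb{D}$, and hence $S_\alpha\in H^{\infty}(\mathbb{D})$ with $\|S_\alpha\|_\infty\le 1$. To separate distinct members, evaluate along the radius $z_r=re^{i\alpha}$. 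One finds $|S_\alpha(z_r)|=\exp(-(1+r)/(1-r))\to 0$ as $r\to 1^-$, whereas for $\beta\neq\alpha$, $|S_\beta(z_r)|=\exp(-(1-r^2)/|e^{i\beta}-re^{i\alpha}|^2)\to 1$, since the denominator stays bounded away from zero. Consequently
$$|S_\alpha(z_r)-S_\beta(z_r)|\ge |S_\beta(z_r)|-|S_\alpha(z_r)|\longrightarrow 1,$$
so $\|S_\alpha-S_\beta\|_\infty\ge 1$ whenever $\alpha\neq\beta$.

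Therefore $\{S_\alpha\}_{\alpha\in [0,2\pi)}$ is an uncountable $1$-separated subset of the unit ball of $H^{\infty}(\mathbb{D})$, which precludes separability: any dense set would have to contain a distinct element in each open ball of radius $1/2$ about $S_\alpha$. Pushing forward by the isometry $T$, the family $\{S_\alpha\circ F\}_{\alpha\in [0,2\pi)}$ is an uncountable $1$-separated subset of $H^{\infty}(\Omega)$, proving non-separability. The main obstacle in this argument is the explicit construction of the discrete uncountable family in $H^{\infty}(\mathbb{D})$; once the singular inner functions are written down, both the estimate $|S_\alpha|\le 1$ and the limit computations along radii are elementary, and the transfer to $\Omega$ is then purely formal via Theorem \ref{composition}.
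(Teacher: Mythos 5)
Your proof is correct and is essentially the paper's own argument: your $S_\alpha$ is exactly the paper's $f_s$ with $s=e^{i\alpha}$ (since $\frac{z+s}{z-s}=-\frac{s+z}{s-z}$), the separation is obtained the same way by evaluating along a radius toward the singular point, and the transfer to $\Omega$ via Theorem \ref{composition} is identical. No gaps.
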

\begin{proof} 
In view of Theorem \ref{composition}, it suffices to show that $H^{\infty}(\mathbb{D})$ is not separable. Hence, let us consider the family of analytic functions 
\begin{equation}
    f_{s}(z) = \exp\Big(\frac{z + s}{z - s}\Big), \quad z\in \mathbb{D}, \quad s\in \partial\mathbb{D}
\end{equation}
Since $$\text{Re }\frac{z + s}{z - s}<0,\quad z\in \mathbb{D}, \quad s\in \partial\mathbb{D},$$
it follows that $f_s\in H^\infty$ with $\|f_s\|_\infty\le 1,~s\in \partial\mathbb{D}$. Moreover,  each $f_s$ extends analytically in $\mathbb{C}\setminus\{s\}$ and from
$$\text{Re }\frac{z + s}{z - s}=0,\quad z,s\in \partial\mathbb{D}, \quad z\neq s,$$
we obtain that $|f_s(z)|=1,~z,s\in \partial\mathbb{D}, ~z\neq s$. On the other hand, 
$$\lim_{r\rightarrow 1}f_s(rs) = \lim_{r\rightarrow 1}\exp\Big(\frac{rs + s}{rs - s}\Big) = \lim_{r\rightarrow 1}\exp\Big(\frac{(r+1)}{(r-1)}\Big) = 0$$
since $r$ approaches 1 from inside the disk. Thus, if $s_1,s_2\in \partial\mathbb{D}$, with $s_1\neq s_2$, we have
$$\|f_{s_1}-f_{s_2}\|_\infty\ge \lim_{r\rightarrow 1}|f_{s_1}(rs_2)-f_{s_2}(rs_2)|=1.$$ 
By Theorem \ref{composition} we obtain 
$$\|f_{s_1}\circ F-f_{s_2}\circ F\|_\infty\ge 1,\quad s_1,s_2\in \partial\mathbb{D}, \quad s_1\neq s_2.$$

This means that the open  balls of radius $1/3$ centered at $f_s\circ F,~s\in \partial\mathbb{D}$ in $H^\infty(\Omega)$  are disjoint. Since this family of nonvoid, open balls is uncountable, $H^\infty(\Omega)$ is not separable.
\end{proof}

\begin{thm}
Let $F$ be the Ahlfors function for $\Omega$ and $p\in\Omega$. Then for each $h\in H^{\infty}(\Omega)$ we have
\begin{equation} \label{bigproof1}
    \Vert{Fh}\Vert_{\infty} = \Vert{h}\Vert_{\infty}
\end{equation}
\end{thm}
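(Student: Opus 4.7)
The upper bound $\|Fh\|_\infty\le \|F\|_\infty\|h\|_\infty=\|h\|_\infty$ is immediate. For the reverse inequality, the plan is to extract, from the extremal character of $F$, a representing measure supported on the set where $|\hat F|=1$, and then to use it to push a pointwise bound at $p$ up to a norm bound.

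I would start by introducing the functional $\Lambda:H^\infty(\Omega)\to\mathbb{C}$ defined by $\Lambda(f):=f'(p)/\gamma$. The Ahlfors extremal property gives $|f'(p)|\le \gamma\|f\|_\infty$, hence $\|\Lambda\|\le 1$, and $\Lambda(F)=1$ shows $\|\Lambda\|=1$. Corollary \ref{meas-repr} then supplies a Borel measure $\mu$ on $\textgoth{M}$ of total variation $1$ with $\Lambda(f)=\int \hat f\,d\mu$ for every $f\in H^\infty(\Omega)$. From $1=\Lambda(F)=\int \hat F\,d\mu$ together with the chain $\bigl|\int \hat F\,d\mu\bigr|\le \int |\hat F|\,d|\mu|\le \|\hat F\|_{C(\textgoth M)}\|\mu\|=1$, equality is forced throughout, which yields $|\hat F|=1$ on $\supp |\mu|$ and $d\mu=\overline{\hat F}\,d|\mu|$. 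Setting $K:=\{m\in\textgoth M:|\hat F(m)|=1\}$, the measure $|\mu|$ is therefore a probability measure concentrated on $K$. Using $F(p)=0$ and $F'(p)=\gamma$, I would compute $\Lambda(Fh)$ in two ways,
\[\Lambda(Fh)=\frac{(Fh)'(p)}{\gamma}=h(p)\qquad\text{and}\qquad \Lambda(Fh)=\int \hat F\hat h\,d\mu=\int |\hat F|^2\hat h\,d|\mu|=\int \hat h\,d|\mu|,\]
arriving at the representation $h(p)=\int \hat h\,d|\mu|$. Since $|\widehat{Fh}|=|\hat F||\hat h|=|\hat h|$ on $K$, this gives
\[|h(p)|\le \int|\hat h|\,d|\mu|\le \sup_{K}|\hat h|=\sup_{K}|\widehat{Fh}|\le \|Fh\|_\infty.\]

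The main obstacle is then to upgrade this pointwise bound $|h(p)|\le \|Fh\|_\infty$ to the uniform bound $\|h\|_\infty\le \|Fh\|_\infty$. My plan is to repeat the construction at a general base point $z\in\Omega$: working with the Ahlfors function $F_z$ associated with $\Omega$ and $z$ in place of $F$, the same argument produces a probability measure $|\mu_z|$ on $K_z:=\{m\in\textgoth M:|\hat F_z(m)|=1\}$ with $h(z)=\int \hat h\,d|\mu_z|$, and hence $|h(z)|\le \sup_{K_z}|\hat h|$. Provided the inclusion $K_z\subseteq K$ holds for every $z\in\Omega$, it follows that $\sup_{K_z}|\hat h|\le \sup_K|\hat h|\le \|Fh\|_\infty$, and taking the supremum over $z\in\Omega$ completes the argument. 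The containment $K_z\subseteq K$ is the delicate point; I would attempt to prove it by identifying the set $\{m\in\textgoth M:|\hat F(m)|<1\}$ with the embedded $\Omega$, using Corollary \ref{almost surjective} together with a M\"obius normalization sending $m(F)\in\mathbb D$ to $0$, and then observing that the same description applies verbatim to $F_z$ and so forces $\{m:|\hat F_z(m)|<1\}=\{m:|\hat F(m)|<1\}$.
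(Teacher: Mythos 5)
Your first half is correct, and it is in fact a cleaner route to the key measure than the one the paper takes: the functional $\Lambda(f)=f'(p)/\gamma$ has norm one and peaks at $F$, the equality chain forces $|\hat F|=1$ on $\supp|\mu|$ and $d\mu=\overline{\hat F}\,d|\mu|$, and the two evaluations of $\Lambda(Fh)$ give $|h(p)|\le\sup_K|\widehat{Fh}|\le\|Fh\|_\infty$. Up to that point everything checks out.

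The gap is the passage from the single point $p$ to all of $\Omega$, and that is where the substance of the theorem lies. Your plan rests on the containment $K_z\subseteq K$, which you propose to obtain by identifying $\{m\in\textgoth{M}:|\hat F(m)|<1\}$ with the embedded copy of $\Omega$. That identification is not a consequence of Corollary \ref{almost surjective} plus a M\"obius change of variable: it amounts to the assertion that every multiplicative functional $m$ with $|m(F)|<1$ is a point evaluation, i.e.\ that the fibre of $\textgoth{M}$ over each interior value $\lambda$ of $F$ consists exactly of the evaluations at the zeros of $F-\lambda$ in $\Omega$. For the disc with $F(z)=z$ this follows from the elementary factorization $f-f(\lambda)=(z-\lambda)g$; for a general $\Omega$ the Ahlfors function takes each value in $\mathbb D$ several times (cf.\ the final theorem of the paper), no such factorization is available, and the statement is a corona-type fibre problem at least as delicate as the theorem you are trying to prove. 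Nothing in your sketch addresses it. The paper avoids any such global identification by a local propagation argument: it sets $\Omega'=\{a\in\Omega:\exists h,\ |h(a)|>1,\ \|Fh\|_\infty\le1\}$, notes that $p\notin\Omega'$ and that $\Omega'$ is open, and proves that $\Omega\setminus\Omega'$ is also open by re-running essentially your measure construction not in $H^\infty(\Omega)$ but in the larger algebra $H^\infty(\Omega\setminus\overline D)$ for a small disc $D$ about a point $q\notin\Omega'$ (the functional $Fh\mapsto h(q)$, of norm one there, is extended by Hahn--Banach), and then using the auxiliary functions $s(z,y)=(z-q)/(z-y)$, which belong to that larger algebra, together with the $h\mapsto h^n$ trick, to move the representing measure from $q$ to all nearby points $y$; connectedness of $\Omega$ then finishes the proof. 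That propagation step, or some substitute for it, is what your proposal is missing.
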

\begin{proof}
To show equality \eqref{bigproof1}, we shall show first that $\Vert {Fh}\Vert_{\infty} \leq \Vert {h}\Vert_{\infty}$ and then that $\Vert {Fh}\Vert_{\infty} \geq \Vert {h}\Vert_{\infty}$. The first one follows immediately since, as we know, the Ahlfors function is bounded by 1. For the latter inequality, we need more arguments. Define $\Omega' = \{a\in \Omega:\exists h\in H^{\infty}(\Omega), s.t. |h(a)| > 1, \Vert{Fh}\Vert_{\infty} \leq 1 \}$. Our goal is to show that $\Omega'$ is empty. We do this by first showing that $\Omega'$ cannot be the entire space $\Omega$ and then that it is both open and closed by connectedness, and thus empty.
\newline
Suppose then that $p\in \Omega'$ and that $F_{1} = hF$ s.t. $\Vert{F_{1}}\Vert_{\infty} \leq 1$ and $|h(p)|>1$. Taking the derivative at $p$, we obtain

\begin{equation}
    |F_{1}'(p)| = |F(p)||h'(p)| + |F'(p)||h(p)| = |F'(p)||h(p)| > |F'(p)|
\end{equation}
which contradicts the definition of the Ahlfors function $F$. Thus $p\notin \Omega'$ that is,  $\Omega' \neq \Omega$.

\vspace*{5mm}
We show $\Omega'$ is open.  Let $a\in \Omega'$ and let $h\in H^{\infty}(\Omega)$ with $|h(a)|>1$, and $\|Fh\|_\infty\le 1$. Since $h$ is continuous at $a$, $\exists \epsilon > 0$ and a disk of radius $\varepsilon$, $\{|a-z|<\varepsilon\}$ centered at $a$ such that $\{|a-z|<\varepsilon\}\subset\Omega$ and $|h|>1$ on that disk, i.e., $\{|a-z|<\varepsilon\}\subset\Omega'$.

\vspace*{5mm}
To show $\Omega'$ is closed, we show that its complement is open. Let $q$ be a point in $\Omega \setminus\Omega'$. By definition,  we have for $h\in H^\infty(\Omega)$ that $\Vert{Fh}\Vert_{\infty}\leq 1$ implies $|h(q)|\leq 1$, which we rewrite as \begin{equation}\label{complement} |h(q)|\le  \|Fh\|_\infty,\quad h\in H^\infty(\Omega),\end{equation}
with equality for $h=1$.

Now  let $D\subset\Omega$ be an open disc centered at $q$  with $\overline{D}\subset\Omega$ and consider the Banach algebra $H^\infty(\Omega\setminus\overline{D})$. Obviously, this algebra contains $H^\infty(\Omega)$ and by the maximum principle (Theorem \ref{maxmod}) we have $$\sup_{z\in \Omega\setminus\overline{D}} |g(z)|=\|g\|_\infty,\quad g\in H^\infty(\Omega).$$ In functional-analytic terms, this means that the linear functional $l$ defined on the subspace
$FH^\infty(\Omega)\subset  H^\infty(\Omega\setminus\overline{D})$ by
$$l(Fh)=h(q), \quad h\in H^\infty(\Omega),$$ satisfies
$$|l(g)|\le \|g\|_\infty,\quad g\in FH^\infty(\Omega),$$
where the   supremum norm is computed in $\Omega\setminus\overline{D}$. Moreover, equality holds when $g=F$.  Then by the Hahn-Banach theorem  (Theorem \ref{Hahn-Banach}) $l$ has an extension of norm $1$ to the whole algebra $H^\infty(\Omega\setminus\overline{D})$ which will also be denoted by $l$. From \eqref{representation} we have that $l$ can be written as 
$$l(f)= \int_{\textgoth{M}}\hat{f}d\lambda, \quad f\in H^\infty(\Omega\setminus\overline{D}),$$
where $\textgoth{M}$ denotes the maximal ideal space of  $ H^\infty(\Omega\setminus\overline{D})$, $\lambda$ is a finite Borel measure on this compact Hausdorff space of total variation $1$, and $\hat{f}$ denotes, as usual, the Gelfand transform of 
$f\in H^\infty(\Omega\setminus\overline{D})$.
Since $|\hat{F}|\le\|F\|_\infty=1$, we infer that 
\begin{equation}
    1= l(F) = \int_{\textgoth{M}}\hat{F}d\lambda \leq \int_{\textgoth{M}}|\hat{F}|d|\lambda| \leq \Vert{\lambda}\Vert = 1 \label{integ}
\end{equation}

Now, we can consider the measure $d\mu = \hat{F}d\lambda$. Let us note some properties of this measure.
\begin{enumerate}
    \item The measure $d\mu$ is non-negative and has total mass $1$. This follows from equation \eqref{integ}, which shows that $1=\mu(\textgoth{M})$ equals its total variation.
    \item $\mu$ is supported on the set where $|\hat{F}| = 1$, since otherwise the integral in \eqref{integ} is strictly less than $1$.
\end{enumerate}

Now take a point $y\in E$, where $E\subset D$ is an open disk centered at $q$ or radius half of that of $D$, and define the auxiliary function
\begin{equation}
    s(z,y) = \frac{(z-q)}{(z-y)}, z\in \Omega\setminus D
\end{equation}
Note that $s(\cdot,y)\in   H^\infty(\Omega\setminus\overline{D})$ and define $u(y)$,
\begin{equation}
    u(y) = \int_{\textgoth{M}}\hat{s}d\mu
\end{equation}
It is easy to verify that if $y\to y_0$ in $E$, 
$s(\cdot,y)$ converges to $s(\cdot,y_0)$ in $H^\infty(\Omega\setminus  \overline{D})$, which implies that $u$ is continuous on $E$. Now let $h\in H^{\infty}(\Omega)$ and write
\begin{equation}
    g(z,y) = \frac{h(z)-h(y)}{z-y}(z-q) \label{g(z)}
\end{equation}
Then $g(\cdot,y)\in H^\infty(\Omega)$, and by the definition of $\mu$, we have that
\begin{equation} \label{representation1}
    \int_{\textgoth{M}} \hat{g}(\cdot,y)d\mu = \int_{\textgoth{M}} \hat{F}\hat{g}(\cdot,y)d\lambda = g(q,y) = 0
\end{equation}
since $g(q)=0$ by \eqref{g(z)}.

\vspace*{1cm}
Observe that $g(z,y)=(h(z)-h(y))s(yz,y)$. Hence, since the Gelfand transform is linear and  multiplicative,   we have $\hat{g}(\cdot,y)=(\hat{h}-h(y))\hat{s}(\cdot,y)$, and  \eqref{representation1} gives 
$$ \int_{\textgoth{M}}(\hat{h}-h(y))\hat{s}(\cdot,y)d\mu=0,$$
or equivalently,
\begin{equation}
    \int_{\textgoth{M}} \hat{h}\hat{s}(\cdot,y)d\mu = h(y)u(y).
\end{equation}
Also recall that $u$ is continuous on $E$ with $u(q)=1$. Therefore, there exists a neighbohood $V$ of $q$ such that $|u(y)|>1/2,~y\in V$, which leads to the estimate 
\begin{equation}
    |h(y)| \leq 2\int_{\textgoth{M}}|\hat{h}\hat{s}(\cdot,y)|d\mu \leq C\int_{\textgoth{M}}|\hat{h}|d\mu, \quad y\in V,
\end{equation}
where $C=2\|s\|_\infty$.
This implies for $y\in V$,
$$ |h(y)| \le C\sup_{\textgoth{M}}\{|\hat{h}(m)|: m \in A \}
$$
where $A$ denotes the  support of $\mu$. We now observe that the estimate actually holds with $C=1$. Indeed, for all positive integers $n$, replacing $h$ by $h^n$, we obtain 
\begin{equation}
    |h^{n}(y)| \leq C\sup\{|\hat{h}^{n}(m)|: m \in A \} = C(\sup_{\textgoth{M}}\{|\hat{h}(m)|: m \in A \})^{n}
\end{equation} so that 
\begin{equation}
    |h(y)| \leq C^{1/n} \sup\{|\hat{h}(m)|: m \in A \} \label{equality}
\end{equation}
and letting $n\to\infty$ we get 
\begin{equation}
    |h(y)| \leq \sup\{|\hat{h}(m)|: m \in A \},\quad y\in V. 
\end{equation}
Finally,  recall that $A\subset \{m\in \textgoth{M}:~|\hat{F}(m)|=1\}$, so that 
$$ |h(y)| \le \sup\{|\hat{h}(m)|: m \in A \}= \sup\{|\widehat{Fh}(m)|: m \in A \}\le \|Fh\|_\infty,$$
for all $y\in V$, that is $V$ is contained in $\Omega\setminus\Omega'$. This  shows that $\Omega'$ is closed and the proof is complete.
\end{proof}

Recall that by the Riemann mapping thoerem (Theorem \ref{riemannmap}), if $\Omega\neq \mathbb{C}$ is simply connected  and $p\in \Omega$, there exists a unique conformal map $\psi$ from $\Omega$ onto $\mathbb{D}$ with $\psi(p)=0,~\psi'(p)>0$. Moreover, $\psi$ appears as the solution of an extremal problem which is very similar to the one defining the Ahlfors function. We shall call $G$ the {\em Riemann map corresponding to $p$}.
\begin{thm}\label{Riemann}
For a simply connected domain $\Omega\neq \mathbb{C}$, the Ahlfors function  corresponding to $p\in \Omega$ equals  the Riemann map corresponding to $p$.
\end{thm}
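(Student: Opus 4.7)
The plan is to identify the Riemann map as the Ahlfors function by using both its admissibility in the extremal problem and a Schwarz-lemma argument for the reverse inequality, then invoking uniqueness.

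First, I would observe that the Riemann map $\psi$ corresponding to $p$ is itself a candidate for the extremal problem defining the Ahlfors function: it is holomorphic on $\Omega$, maps onto $\mathbb{D}$ so $\|\psi\|_\infty = 1$, and satisfies $\psi(p) = 0$ with $\psi'(p) > 0$. In particular, the Ahlfors extremal value satisfies
\begin{equation}
\gamma = \sup\{|h'(p)| : h \in H^\infty(\Omega),\ \|h\|_\infty \le 1\} \ge \psi'(p) > 0,
\end{equation}
which in passing shows that $H^\infty(\Omega)$ is not trivial, so the Ahlfors function $F$ exists, is non-constant, and (by the previous theorem) satisfies $F(p) = 0$ and $F(\Omega) \subset \mathbb{D}$.

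Next, I would push the Ahlfors function to the disk via the inverse Riemann map. Define $\varphi := F \circ \psi^{-1} \colon \mathbb{D} \to \mathbb{D}$. Because $\psi^{-1}(0) = p$ and $F(p) = 0$, this $\varphi$ is holomorphic, bounded by $1$ in modulus, and vanishes at the origin. The Schwarz lemma (Lemma \ref{schwarz}) therefore gives $|\varphi'(0)| \le 1$. By the chain rule,
\begin{equation}
\varphi'(0) = F'(p)\cdot (\psi^{-1})'(0) = \frac{F'(p)}{\psi'(p)} = \frac{\gamma}{\psi'(p)},
\end{equation}
so $\gamma \le \psi'(p)$. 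Combined with the previous step this yields $\gamma = \psi'(p)$, and hence $|\varphi'(0)| = 1$.

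Finally, the equality case of the Schwarz lemma forces $\varphi$ to be a rotation: $\varphi(w) = e^{i\theta} w$ for some $\theta \in \mathbb{R}$, equivalently $F = e^{i\theta}\psi$ on $\Omega$. Differentiating at $p$ gives $\gamma = F'(p) = e^{i\theta}\psi'(p)$, and since both $\gamma$ and $\psi'(p)$ are positive real numbers we conclude $e^{i\theta} = 1$, so $F = \psi$. Uniqueness of the Ahlfors function (already established) then identifies $\psi$ as the Ahlfors function.

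There is no real obstacle here, only a bookkeeping point: matching the normalizations ($F'(p) = \gamma \ge 0$ for the Ahlfors function and $\psi'(p) > 0$ for the Riemann map) is essential for eliminating the rotation factor at the end. Everything else is a direct application of Schwarz's lemma to the composition with $\psi^{-1}$.
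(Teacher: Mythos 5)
Your proposal is correct and follows essentially the same route as the paper: compose $F$ with the inverse Riemann map, apply the Schwarz lemma at the origin to get $F'(p)\le\psi'(p)$, and use admissibility of $\psi$ for the reverse inequality. The only cosmetic difference is that you finish via the equality case of the Schwarz lemma (rotation plus normalization), whereas the paper concludes directly from $\psi'(p)=F'(p)=\gamma$ and the uniqueness of the extremal; both closings are valid.
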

\begin{proof}
 Let  $p\in\Omega$ and let $F$ be the corresponding Ahlfors function. The idea is to compose the Ahlfors function with a Riemann map  from the disc onto $\Omega$. 
 Let $\psi$ be the Riemann map corresponding to $p$, and let $\phi=\psi^{-1}, \psi:\Omega\to\mathbb{D}$.
Then $\phi(0)=p$ and since $\psi(\phi(z))=z,~  z\in \mathbb{D}$, $$\phi'(0)=\frac1{\psi'(p)}.$$
The composition $F\circ\phi$ maps $\mathbb{D}$ into itself and  satisfies $F\circ\phi(0)=0$. Then by  the Schwarz lemma (Theorem \ref{schwarz})  we have that
$$|(F\circ\phi)'(0)|=\left|\frac{
F'(p)}{\psi'(p)}\right|\le 1.$$
On the other  hand,  $\psi$ is in the unit ball of $H^\infty(\Omega)$, hence $0<\psi'(p)\le F'(p)$. Thus $\psi'(p)=F'(p)$ and by the uniqueness of the Ahlfors function it follows that $F=\psi$. 
\end{proof}

\begin{cor}\label{disk}
Let $\Omega=\mathbb{D}$ and $p\in \mathbb{D}$. Then  the Ahlfors function corresponding to $p$ is  $F(z)=\frac{z-p}{1-\overline{p}z}$.
\end{cor}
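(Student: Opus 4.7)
The plan is to invoke Theorem \ref{Riemann}, which identifies the Ahlfors function on a simply connected proper subdomain of $\mathbb{C}$ with the corresponding Riemann map. Since $\mathbb{D}$ is itself simply connected and a proper subdomain of $\mathbb{C}$, all that remains is to show that the Möbius transformation $\psi(z) = \frac{z-p}{1-\overline{p}z}$ is the unique Riemann map from $\mathbb{D}$ onto $\mathbb{D}$ sending $p$ to $0$ with positive derivative at $p$.

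First I would verify that $\psi$ is a conformal automorphism of $\mathbb{D}$: it has its only pole at $1/\overline{p}$, which lies outside $\overline{\mathbb{D}}$ (or it has no pole at all if $p = 0$), and one checks that $|\psi(z)| = 1$ precisely when $|z| = 1$, so by the maximum modulus principle (Theorem \ref{maxmod}) $\psi(\mathbb{D}) \subset \mathbb{D}$. The map $z \mapsto \frac{z+p}{1+\overline{p}z}$ is of the same form and is readily checked to be a two-sided inverse, confirming that $\psi : \mathbb{D} \to \mathbb{D}$ is a bijection. Then I would check the two normalization conditions: clearly $\psi(p) = 0$, and a direct differentiation yields
$$\psi'(z) = \frac{1 - |p|^2}{(1 - \overline{p}z)^2}, \qquad \psi'(p) = \frac{1}{1 - |p|^2} > 0.$$

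By the uniqueness clause of the Riemann mapping theorem (Theorem \ref{riemannmap}), together with the standard observation that any two normalized conformal self-maps of $\mathbb{D}$ fixing $p$ must differ by a rotation which the positivity $\psi'(p) > 0$ rules out, we conclude that $\psi$ is the Riemann map corresponding to $p$. Theorem \ref{Riemann} then gives $F = \psi$, which is precisely the formula claimed. There is no real obstacle here: the genuine work was carried out in the proof of Theorem \ref{Riemann}, and the corollary amounts to recognizing $(z - p)/(1 - \overline{p}z)$ as the normalized Riemann map of $\mathbb{D}$ onto itself.
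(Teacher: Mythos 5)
Your proposal is correct and follows exactly the paper's route: the paper likewise deduces the corollary from Theorem \ref{Riemann} by observing that $\frac{z-p}{1-\overline{p}z}$ is the normalized Riemann map of $\mathbb{D}$ onto itself. You simply spell out the verification (automorphism, $\psi(p)=0$, $\psi'(p)=\frac{1}{1-|p|^2}>0$) that the paper leaves implicit.
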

\begin{proof} This follows directly from Theorem \ref{Riemann} since the automorphism in the
statement is the Riemann map from $\mathbb{D}$ onto itself, corresponding to the point $p$.
 \end{proof}

\begin{ex} \label{exampleahl}
Let $\Omega=\mathbb{D}^{*}$, i.e., the exterior of the closed unit disk in the Riemann sphere. Then, the Ahlfors function on this domain for the point $p=2$ will be given by $F(z)=\frac{z-2}{2z-1}$. It is obtained by composing the automorphism in Corollary \ref{disk} for  $p=\frac{1}{2}$ with the function $\frac{1}{z}$. 
\end{ex}

\begin{ex}
Let $E$ be a compact subset of the real line of positive Lebesgue measure and let $\Omega = \textbf{S}^2\setminus E$. Then, we can write explicitly the Ahlfors function for $\Omega$ corresponding to $p=\infty$. For this purpose, define
$$h(z) = \frac{1}{2}\int_{E}\frac{dt}{z-t}, z\notin E$$
The function $h(z)$ maps $\textbf{S}^2\setminus E$ into the horizontal strip $|\operatorname{Im}(z)|<\frac{\pi}{2}$. One can see this by observing that
$$\operatorname{Im}\Big(\frac{1}{z-t}\Big)=\frac{-y}{(x-t)^2+y^2}$$ 
and so we obtain the following inequality
$$|\operatorname{Im(h(z))}|=\frac{1}{2}\Big|\int_{E}\frac{y}{(x-t)^2+y^2}dt\Big| < \frac{1}{2}\int_{\mathbb{R}}\frac{|y|}{(x-t)^2+y^2}dt = \frac{\pi}{2}$$
Hence, the function

$$F(z) = \frac{e^{h(z)}-1}{e^{h(z)}+1}$$
maps $\Omega$ into the open unit disk and since $\operatorname{Re}(e^{h(z)})>0$, we have that $|F(z)|<1$. Additionally, $F(\infty)=0$. We can take the derivative to obtain
$$F'(z) = \frac{-e^{h(z)}\int_{E}\frac{1}{(z-t)^2}dt}{(e^{h(z)}+1)^2}$$
and at $z=\infty$, we get
$$F'(\infty)=\frac{1}{4}\int_{E}dt = \frac{1}{4}\lambda(E),$$
where $\lambda(E)$ is the Lebesgue measure of the set $E$. Thus, we have that $\gamma(\Omega)\geq F'(\infty)$. In \cite{pommerenke}, C. Pommerenke showed that $\gamma(\Omega)=\frac{1}{4}\int_{E}dt$, which means that $F$ is the Ahlfors function for $\Omega$ and $\infty$.
\end{ex}
We close with a general theorem of Ahlfors which refers to domains with real-analytic boundary and can be seen as a generalization of Theorem \ref{Riemann}.  The proof of this result is fairly involved and can be found in  \cite{fisher},  (Chapter 5, Theorem 1.6)
\begin{thm}
Let $\Omega$ be bounded by $m+1$ disjoint analytic simple closed curves denoted by $\Gamma_0,...,\Gamma_m$. Let $F$ be the Ahlfors function for $\Omega$ and $p\in\Omega$. Then 
\begin{itemize}
       \item F extends analytically across each $\Gamma_j$ and maps  $\Gamma_j$ homeomorphically onto the unit circle.
    \item $F'$ is not zero on any $\Gamma_j$.
\item $F$ takes any  value in $\mathbb{D}$ precisely $m+1$ times.
\end{itemize}
\end{thm}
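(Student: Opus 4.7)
The plan is to pass from interior to boundary information: first establish $|F|=1$ on each $\Gamma_j$, then use Schwarz reflection to extend $F$ analytically across the boundary, and finally count preimages via the argument principle, with extremality pinning down the degree.

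The starting point is the norm-preserving identity $\|Fh\|_\infty = \|h\|_\infty$ for all $h \in H^\infty(\Omega)$, established previously. For domains with analytic boundary, bounded analytic functions admit non-tangential boundary values almost everywhere, and peaking functions are available in $H^\infty(\Omega)$: for any $\zeta_0 \in \Gamma_j$, composition of a local conformal chart with a suitable element of $H^\infty(\mathbb{D})$ yields $h \in H^\infty(\Omega)$ with $\|h\|_\infty = 1$ and $|h|$ strictly below $1$ outside a small neighborhood of $\zeta_0$. If $|F|$ dropped below $1$ on a subset of some $\Gamma_j$ of positive arc length, a point of Lebesgue density would supply such a peaking $h$ forcing $\|Fh\|_\infty < \|h\|_\infty$, contradicting the identity. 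So $|F| = 1$ almost everywhere on $\partial\Omega$.

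The analyticity of the $\Gamma_j$ then allows Schwarz reflection: parametrize a strip around $\Gamma_j$ by a holomorphic chart, transport $F$ to a half-strip with a.e.\ unimodular boundary values, and reflect via $F(\zeta^*) = 1/\overline{F(\zeta)}$ to obtain an analytic extension of $F$ to a neighborhood of $\overline{\Omega}$. On this extension $|F| = 1$ holds everywhere on each $\Gamma_j$. To show $F' \ne 0$ on $\Gamma_j$, I would argue by perturbative extremality: a zero of $F'$ at $\zeta_0 \in \Gamma_j$ would make $F$ at least $2$-to-$1$ there, and this excess multiplicity can be traded for a perturbation $F + \varepsilon \eta$ in the unit ball of $H^\infty(\Omega)$ with strictly larger derivative at $p$, contradicting extremality. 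Once $F' \ne 0$ on $\Gamma_j$, the restriction $F|_{\Gamma_j} : \Gamma_j \to \partial \mathbb{D}$ is a local diffeomorphism of circles, hence an $n_j$-fold covering for some $n_j \ge 1$.

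The final step is the degree count. The extended $F$ is a proper holomorphic map $\Omega \to \mathbb{D}$ of degree $N = \sum_j n_j$, so by the argument principle $F$ assumes every value in $\mathbb{D}$ exactly $N$ times counted with multiplicity; the bound $N \ge m+1$ is automatic since each $n_j \ge 1$. The reverse bound, $n_j = 1$ for every $j$, is the main obstacle: one must convert the topological statement ``$F|_{\Gamma_j}$ wraps around $\partial\mathbb{D}$ at least twice'' into an explicit competitor $G \in H^\infty(\Omega)$ with $\|G\|_\infty \le 1$ and $|G'(p)| > |F'(p)|$. On a multiply connected $\Omega$ there is no direct Blaschke-type factorization, so the argument must lean on the reflection extension together with a careful local perturbation near a doubly-covered boundary arc that effectively ``unwraps'' one covering while preserving the sup-norm bound. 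Granting this perturbation lemma, each $n_j = 1$, so every $F|_{\Gamma_j}$ is a homeomorphism onto $\partial \mathbb{D}$ and $F$ takes every value in $\mathbb{D}$ precisely $m+1$ times, completing all three assertions.
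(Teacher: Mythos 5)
There is no proof in the paper to compare against: the author explicitly omits the argument and refers to Fisher (Chapter~5, Theorem~1.6), remarking that it is ``fairly involved.'' Judged on its own, your outline follows the classically correct skeleton --- unimodularity of $F$ on $\partial\Omega$, Schwarz reflection across the analytic curves, properness and the argument principle --- but it contains a genuine gap at the decisive step, and you flag it yourself: the claim that each winding number $n_j$ equals $1$. Everything up to that point only yields that $F$ takes each value \emph{at least} $m+1$ times; the assertion that each $\Gamma_j$ maps \emph{homeomorphically} onto the circle is the actual content of Ahlfors' theorem. The ``perturbation lemma'' you invoke --- locally unwrapping a doubly covered boundary arc to produce a competitor $G$ with $\|G\|_\infty\le 1$ and $|G'(p)|>|F'(p)|$ --- is not a known or evidently constructible device; on a multiply connected domain one cannot modify $F$ near one boundary component without losing control of the sup norm globally, which is exactly why the standard proofs instead pass through the dual (linear) extremal problem and the Garabedian function $\psi$, using the relation $F\psi\,dz\ge 0$ on $\partial\Omega$ and a zero count for $F\psi$ to pin the degree at $m+1$. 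Granting an unproved lemma that is equivalent to the hardest part of the statement does not constitute a proof.

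Two secondary points. First, your argument for $F'\neq 0$ on $\Gamma_j$ via ``perturbative extremality'' is both unsubstantiated and unnecessary: once $F$ extends analytically across $\Gamma_j$ with $|F|=1$ on $\Gamma_j$ and $|F|<1$ in $\Omega$, the Hopf boundary-point lemma applied to the negative harmonic function $\log|F|$ near $\Gamma_j$ gives a nonvanishing normal derivative, hence $F'\neq 0$ there, with no reference to extremality. Second, the deduction of $|F|=1$ a.e.\ on $\partial\Omega$ from $\|Fh\|_\infty=\|h\|_\infty$ needs more care than a peak function at a density point: a peak at a single boundary point only controls $\limsup_{z\to\zeta_0}|F(z)|$, which does not by itself govern the nontangential boundary values on a set of positive measure; one should instead test the identity against an outer function whose boundary modulus is concentrated on the putative set where $|F^\ast|<1$. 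Both of these are repairable; the degree count is not, as presented.
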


\bibliographystyle{unsrt}
\bibliography{sample}


\end{document}